\newcommand{\mb}{\mathbb}
\newcommand{\mc}{\mathcal}
\newcommand{\M}{\mc M}
\newcommand{\D}{\mc D}
\def\H{\mc H}
\newcommand{\SC}{\mathcal{S}({\mb R})}
\newcommand{\SSS}{\mathcal{S}^\times({\mb R})}
\newcommand{\LDD}{{\mc L}(\D,\D^\times)}
\newtheorem{defn}{Definition}[section]
\newtheorem{prop}[defn]{Proposition}
\newtheorem{lemma}[defn]{Lemma}
\newtheorem{cor}[defn]{Corollary}
\newtheorem{example}[defn]{Example}
\newtheorem{rem}[defn]{Remark}
\def\x{\relax\ifmmode {\mbox{*}}\else*\fi}
\newcommand{\beex}{\begin{example}$\!\!${\bf }$\;$\rm }
\newcommand{\enex}{ \end{example}}
\newcommand{\berem}{\begin{rem}$\!\!${\bf }$\;$\rm }
\newcommand{\enrem}{ \end{rem}}
\newcommand{\bedefi}{\begin{defn}$\!\!${\bf }$\;$\rm }
\newcommand{\findefi}{\end{defn}}
\newcommand{\ip}[2]{\left\langle {#1}|{#2}\right\rangle}
\newcommand{\RN}{{\mb R}}
\begin{document}

\title[Distribution Riesz-Fischer]{ Riesz-Fischer maps, Semiframes and Frames\\ in rigged Hilbert spaces}%

\author{Francesco Tschinke}%
\address{Dipartimento di Matematica e Informatica, Universit\`a
di Palermo, I-90123 Palermo (Italy)} \email{francesco.tschinke@unipa.it}

\begin{abstract}
\noindent
In this note a review, some  considerations and new results about maps with values in a distribution space and domain in a $\sigma$-finite measure space $X$,  are obtained. In particular, it is a survey about Bessel, frames and bases (in particular Riesz and Gel'fand bases) in a distribution space. In this setting, the Riesz-Fischer maps and semi-frames are defined and new results about them are attained. Some example in tempered distributions space are examined.

\smallskip
\noindent{\bf Keywords:}  Frames; Bases; Distributions; Rigged Hilbert space.
\end{abstract}
\subjclass[2010]{Primary 42C15; Secondary , 47A70, 46F05}
\maketitle
\section{Introduction}
 Given a Hilbert space $\H$ with inner product $\ip{\cdot}{\cdot}$ and norm $\|\cdot\|$, a frame is a sequence of   vectors  $\{f_n\}$  in $\H$  if  there exists $ A,B>0$:
$$
 A\|f\|^2\leq\sum_{k=1}^\infty|{\ip{f}{f_n}}|^2\leq B\|f\|^2,\quad\forall f\in\H.
$$
As known, this notion generalize  orthonormal bases, and  has reached a crescent level of popularity in many fields of interests, such  signal theory, image processing, etc., but it is also important tool in pure mathematics: in fact it  plays key roles in wavelets theory, time-frequency analysis, the theory of shiftinvariant
spaces, sampling theory and many other areas (see \cite{christensen}\cite{christensen2}\cite{groch}\cite{heil}\cite{keiser}).

A generalization of frame,  the {\it continuous} frame, was proposed by Kaiser, and by Al\'{\i}, Antoine, Gazeau: if $(X,\mu)$ is a measure space  with $\mu$ as $\sigma$-finite positive measure, a  {\it weakly} measurable function $f_x:X\rightarrow\H$ ( i.e.:  $x\mapsto\ip{f}{f_x}$ is $\mu$-measurable on  $X$) is a {\it continuous} frame of $\H$ w.r. to $\mu$  if there exists $A,B>0$ s.t., for all $g\in\H$:
$$
A\|g\|^2\leq\int_X|\ip{g}{f_x}|^2 d\mu\leq B\|g\|^2,\quad\forall g\in\H.
$$
Today, the notion of continuous frames in Hilbert spaces and its link with the theory of coherent states is well-known in the literature \cite{AAG_book, AAG_paper}, \cite{keiser}.

With the collaboration of C. Trapani and T. Triolo \cite{TTT},  the author   introduces bases and frames in a space of distributions. To illustrate the motivations for this study, we have to consider the
  {\it rigged Hilbert space} (or Gel'fand triple) \cite{gelf3,gelf}: that is, if $\H$ is a Hilbert space, the triple:
 $$
 \D[t]\subset\H\subset\D^\times[t^\times].
 $$
where $\D[t]$ is a dense subspace of $\H$ endowed with a locally convex topology $t$ stronger than the Hilbert norm and $\D^\times[t^\times]$ is the conjugate dual space of $\D$ endowed with strong dual topology $t^\times$ (the inclusions are dense and continuous).

In this setting, let us consider the \textit{generalized eigenvectors} of an operator, i.e. eigenvectors  that do not belong to $\H$. More precisely:
 if $A$ is an essentially self-adjoint operator in $\D$ which maps $\D[t]$ into $\D[t]$ continuously, then $A$ has a continuous extension $\hat{A}$   given by the adjoint, (i.e. $\hat{A}=A^\dag$) from  $\D^\times$ into itself.
A \textit{generalized eigenvector} of $A$, with eigenvalue $\lambda\in\mathbb C$, is  an eigenvector of $\hat{A}$; that is, a conjugate linear functional $\omega_\lambda\in\D^\times$ such that:
$$
\ip{Af}{\omega_\lambda}=\lambda \ip{f}{\omega_\lambda},\quad\forall f\in\D.
$$
The above equality can be read as $\hat{A}\omega_\lambda=A^\dag\omega_\lambda=\lambda\omega_\lambda$.

 A simple and explicative example is given by the derivative operator: $A:=i\frac{d}{dx}:\mathcal S(\mathbb R)\rightarrow S(\mathbb R)$ where $\mathcal S(\mathbb R)$ is the Schwartz space (i.e. infinitely differentiable functions rapidely decreasing). The rigged Hilbert space is:
\begin{equation}\label{schwartz}
\mathcal S(\mathbb R)\subset  L^2(\mathbb R)\subset\mathcal S^\times(\mathbb R),
\end{equation}
the set $\mathcal S^\times(\mathbb R)$ is known as space of {\it tempered distributions}. Then $\omega_\lambda(x)=\frac{1}{\sqrt{2\pi}}e^{-i\lambda x}$ -that do not belongs to $L^2(\mathbb R)$-  is a generalized eigenvector of $A$ with $\lambda$ as eigenvalue.


Each function $\omega_\lambda$ can be viewed as a regular distribution of $\SSS$, in the sense that, for every fixed $\lambda\in {\mb R}$, through  the following integral representation:
$$  \ip{\phi}{\omega_\lambda} =\int_\RN \phi(x)\overline{\omega_\lambda(x)}dx =\frac{1}{\sqrt{2\pi}}\int_\RN \phi(x)e^{i\lambda x}dx=\check{\phi}(\lambda)
$$
it is defined a continuous linear functional $\phi\mapsto\check{\phi}(\lambda)$ on $\SC$. Furthermore
for the Fourier-Plancherel theorem, one has: $\|\check{\phi}\|^2_2=\int_\RN|\ip{\phi}{\omega_\lambda}|^2dx=\|\phi\|^2_2$.

With a limit procedure, Fourier transform can be extended to $L^2(\mathbb{R})$. Since a function $f\in L^2(\mathbb{R})$ defines a regular tempered distribution, we have, for all $\phi\in\mathcal S(\mathbb R)$:
$$\ip{\phi}{f}\!:=\!\!\int_{\mathbb R}\!\!\!f(x)\phi(x)dx\!=\!\!\int_{\mathbb R}\!\!\left(\!\!\frac{1}{\sqrt{2\pi}}\!\int_\RN \!\!\hat{f}(\lambda)e^{i\lambda x}d\lambda\! \right)\! \phi(x)dx\!=\!\!\!\int_{\mathbb R}\!\hat{f}(\lambda)\check{\phi}(\lambda)d\lambda\!=\!\!\!\int_{\mathbb R}\!\!\!\hat{f}(\lambda)\!\ip{\phi}{\omega_\lambda}d\lambda.
$$
That is:
\begin{equation}
\label{eqn_fourier}
f=\int_{\mathbb R}\!\hat{f}(\lambda)\omega_\lambda d\lambda.
\end{equation}
in weak sense.
The family $\{\omega_\lambda;\,\lambda \in {\mb R}\}$ of the previous example can be considered as the range of a weakly measurable function $\omega: {\mb R}\to \SSS$ which allows a representation \eqref{eqn_fourier}  of any $f\in L^2(\mathbb R)$ in terms of generalized eigenvectors of $A$: it is an example of {\it distribution basis}. More precisely, since the Fourier-Plancherel theorem corresponds to the Parseval identity, this is an example of {\em Gel'fand distribution bases}, that plays, in $\mathcal S^\times(\RN)$, the role  of orthonormal basis in Hilbert space.

The example above is a particular case of the Gel'fand-Maurin theorem  (see \cite{gelf}\cite{gould} for details), that  stats that, if $\D$ is a domain in a Hilbert space $\H$ which is a nuclear space under a certain topology $\tau$, and $A$ is an essentially self-adjoint operator in $\D$ which maps $\D[t]$ into $\D[t]$ continuously, then $A$ admits a {\em complete  set of generalized eigenvectors}.

The completeness of the set $\{\omega_\lambda; \lambda \in \sigma(\overline{A})$\} is understood in the sense that  the Parseval identity holds, that is:
\begin{equation}
\label{gelfand}
\|f\|= \left(\int_{\sigma(\overline{A})} |\ip{f}{\omega_\lambda}|^2 d\lambda\right)^{1/2}, \quad \forall f\in \D.
\end{equation}
To each $\lambda$ there corresponds the subspace $\D^\times_\lambda \subset \D^\times$ of all  generalized eigenvectors whose eigenvalue is $\lambda$. For all $f\in\D$ it is possible to define a linear functional $\tilde{f}_\lambda$ on $\D^\times_\lambda$ by $\tilde{f}_\lambda(\omega_\lambda):=\ip{\omega_\lambda}{f}$ for all $\omega_\lambda\in\D_\lambda^\times$. The correspondence $\D\rightarrow\D_\lambda^{\times\times}$ defined by $f\mapsto \tilde{f}_\lambda$ is called the \textit{spectral decomposition of the element $f$ corresponding to $A$}. If $\tilde{f}_\lambda\equiv 0$ implies $f=0$ (i.e. the map $f\mapsto \tilde{f}_\lambda$ is injective) then $A$ is said to have \textit{a complete system of generalized eigenvectors}.

The completeness and condition (\ref{gelfand})  gives also account of a kind of {\em orthogonality} of the $\omega_\lambda$'s: the family $\{\omega_\lambda\}_{\lambda\in\sigma(\overline{A})}$  in  \cite{TTT} is called {\em Gel'fand basis} .

Another meaningful situation come from quantum mechanics, known as  the \textit{spectral expansion theorem}.  Let us consider    the rigged Hilbert space \eqref{schwartz} (one-dimensional case).
The  Hamiltonian operator $H$ is  an essentially self-adjoint operator on $\mathcal S(\mathbb R)$,  with self-adjoint extension $\overline{H}$ on the domain $\D(\overline H)$, dense in $L^2(\mathbb R)$.
Then, in the case of non-degenerate spectrum,  for  all $f\in L^2(\mathbb R)$ the following decomposition holds:
$$
f=\sum_{n\in J} c_n u_n+\int_{\sigma_c} c(\alpha)u_\alpha d\mu(\alpha).
$$
The set  $\{u_n\}_{n\in J}$, $J\subset \mathbb N$,  is an orthonormal system of eigenvectors of $H$;
 the measure $\mu$ is a continuous measure on $\sigma_c\subset\mathbb R$ and
 $\{u_\alpha\}_{\alpha\in{\sigma_c}}$ are {\it generalized eigenvectors} of $H$ in $\mathcal S^\times(\mathbb R)$. This decomposition is {\it unique}.
 Furthermore:
$$
\|f\|^2=\sum_{n\in J}|c_n|^2+\int_{\sigma_c}|c(\alpha)|^2d\mu(\alpha).
$$
The subset $\sigma_c$, corresponding to the continuous spectrum,  is a union of intervals of $\mathbb R$, i.e. the index $\alpha$  is continuous. The generalized eigenvectors $u_\alpha$ are distributions: does not belongs to $ L^2(\mathbb R)$, therefore the ``orthonormality'' between generalized eigenvectors is not defined. Nevertheless, it is  often denoted by the  physicists with the Dirac delta: $``\ip{u_\alpha}{u_{\alpha'}}$''=$\delta_{\alpha-\alpha'}$.

Frames, semi-frames, Riesz bases,  etc. are families of vectors that generalize bases in Hilbert space maintaining the possibility to {\em reconstruct} vectors of the space as the superposition of more 'elementary' vectors
renouncing often to the uniqueness of the representation, but gaining in  versatility.

In this sense, they have been considered in literature in various space of functions and distributions: see for example (but the list  is not exhaustive): \cite{jpa_ct_rppip, gb_ct_riesz,cordero,feich2,feich 3, Kpet,stoevapilipovic}.

It is remarkable that in a separable Hilbert space, orthonormal bases as well as Riesz bases are both necessarily countable and also in more general situations Riesz bases cannot be {\em continuous}, but they are {\em discrete} \cite{hosseini, jacobsen, bal_ms_2}. In the distributions and rigged Hilbert space setting the corresponding objects can be continuous, although  the families of functions forming the frame are less regular.

Revisiting some results of  \cite{TTT} about Bessel map, frames and bases (Gel'fand and Riesz bases) in distribution set-up,  in this paper  it is proposed  the notion of Riesz-Fischer map and of semi-frame in a space of distributions.

After some preliminaries and notations (Section \ref{sect_2}), in Section \ref{sect_3} are considered distribution Bessel maps and it is proposed the notion of distribution Riesz-Fischer map, showing some new results about them (such as bounds and duality properties). Since distribution Bessel maps are not, in general, bounded by an Hilbert norm,  the author consider appropriate to define in Section \ref{sect_4} the {\em distribution semi-frames}, notion already introduced in a Hilbert space by J.-P. Antoine and P. Balasz \cite{semifr1}. Finally, distribution frames,  distribution basis,  Gel'fand and Riesz basis, considered in \cite{TTT}, are revisited in Section \ref{sect_5} with some examples in addition.
\medskip
\section{Preliminary definitions and facts}
\label{sect_2}
\subsection{Rigged Hilbert space}
  Let $\D$ be a dense subspace of  $\H$ endowed with a locally convex topology $t$  finer than the topology induced by the Hilbert norm.
 Denote $\D^\times$  as the vector space of all continuous conjugate
linear functionals on $\D[t]$, i.e., the conjugate dual of $\D[t]$,
endowed with the {\em strong dual topology} $t^\times=
\beta(\D^\times,\D)$, which can be defined by the seminorms
\begin{equation}\label{semin_Dtimes}
q_\M(F)=\sup_{g\in \M}|\ip{F}{g}|, \quad F\in \D^\times,
\end{equation}
where $\M$ is a bounded subsets of $\D[t]$. In this way, it is defined, in standard fashion,
a {\em rigged Hilbert space}:
\begin{equation}\label{eq_one_intr}
\D[t] \hookrightarrow  \H \hookrightarrow\D^\times[t^\times],
\end{equation}
Where $\hookrightarrow$ denote continuous and dense injection.

Since the Hilbert space $\H$ can be identified  with a
subspace of $\D^\times[t^\times]$, we will systematically read
\eqref{eq_one_intr} as a chain of topological inclusions: $\D[t]
\subset  \H \subset\D^\times[t^\times]$.  These identifications
imply that the sesquilinear form $B( \cdot , \cdot )$ which puts $\D$
and $\D^\times$ in duality is an extension of the inner product of
$\H$;
 i.e. $B(\xi, \eta) = \ip{\xi}{\eta}$, for every $\xi, \eta \in \D$ (to simplify notations we adopt the symbol $\ip{\cdot}{\cdot}$ for both of
 them) and also that the embedding map $I_{\D,\D^\times}:\D\to \D^\times$ can be taken to act on $\D$ as $I_{\D,\D^\times}f=f$ for every $f \in \D$. For more  insights, besides \cite{gelf3,gelf}, see also \cite{horvath}.
\subsection{$\LDD$-space}
If $\D[t] \subset \H \subset \D^\times[t^\times]$ is a rigged
Hilbert space, let us   denote $\LDD$ as the vector space of all continuous linear maps from $\D[t]$ into  $\D^\times[t^\times]$. {If $\D[t]$ is barreled (e.g., reflexive)}, an involution $X \mapsto X^\dag$ can be introduced in $\LDD$  by the identity:
$$ \ip{X^\dag \eta}{ \xi} = \overline{\ip{X\xi}{\eta}}, \quad \forall \xi, \eta \in \D.$$  Hence, in this case, $\LDD$ is a $^\dagger$-invariant vector space.


We also denote by ${\mc L}(\D)$ the algebra of all continuous linear operators $Y:\D[t]\to \D[t]$ and by ${\mc L}(\D^\times)$ the algebra of all continuous linear operators $Z:\D^\times[t^\times]\to \D^\times[t^\times]$.
If $\D[t]$ is reflexive, for every $Y \in {\mc L}(\D)$ there exists a unique operator $Y^\times\in {\mc L}(\D^\times)$, the adjoint of $Y$, such that
$$ \ip{\Phi}{Yg} = \ip{Y^\times \Phi}{g}, \quad\forall \Phi \in \D^\times, g \in \D.$$ In similar way an operator $Z\in {\mc L}(\D^\times)$ has an adjoint $Z^\times\in {\mc L}(\D)$ such that $(Z^\times)^\times=Z$. In the monograph \cite{ait_book} the topic is treated more deeply.

\subsection{Weakly measurable maps}
In this paper is considered the {\it weakly measurable map} as a subset of $\D^\times$: if $(X,\mu)$ is a measure space with $\mu$ a $\sigma$-finite positive measure,
  $\omega: x\in X\to \omega_x \in \D^\times$ is a weakly measurable map if, for every $f \in \D$, the complex valued function $x \mapsto \ip{f}{\omega_x}$ is $\mu$-measurable.
Since the form which puts $\D$ and $\D^\times$ in conjugate duality is an extension of the inner product of $\H$, we write $\ip{f}{\omega_x}$ for $\overline{\ip{\omega_x}{f}}$,  $f \in \D$. We have the following:
\bedefi
\label{tandg}
 Let $\D[t]$ be a locally convex space, $\D^\times$ its conjugate dual and $\omega: x\in X\to \omega_x \in \D^\times$ a weakly measurable map, then:
\begin{itemize}
\item[i)] $\omega$ is \textit{total} if, $f \in \D $ and $\ip{f}{\omega_x}=0$  $\mu$-a.e. $x \in X$ implies $f=0$;
\item[ii)]$\omega$ is \textit{$\mu$-independent} if the unique measurable function $\xi$ on $(X,\mu)$ such that: \\$\int_X \xi(x)\ip{g}{\omega_x} d\mu=0$, for every $g \in \D$, is $\xi(x)=0$ $\mu$-a.e.
\end{itemize}
\findefi
\section{Bessel and Risz-Fischer distribution maps}\label{sect_3}
\subsection{Bessel distribution maps on locally convex spaces}
\bedefi Let $\D[t]$ be a locally convex space. A weakly measurable map $\omega$ is a {\em Bessel distribution map} (briefly: Bessel map) if for every $f \in \D$,
$ \int_X |\ip{f}{\omega_x}|^2d\mu<\infty$.
\findefi
The following Proposition, proved in \cite{TTT}, is the analogue of Proposition 2 and Theorem 3 in \cite[sec.2, Cap.4]{young}.
\begin{prop}\label{prop2} If $\D[t]$ a Fr\'{e}chet space, and $\omega: x\in X \to \omega_x\in \D^\times$ a weakly measurable map. The following statements are equivalent.
\begin{itemize}
\item[(i)]
$\omega$ is a  Bessel  map;
\item[(ii)]there exists a continuous seminorm $p$ on $\D[t]$ such that:
\begin{equation}\label{eqn_bessel1}\left( \int_X |\ip{f}{\omega_x}|^2d\mu\right)^{1/2}\leq p(f), \quad \forall f \in \D.\end{equation}
 \item[(iii)] for every bounded subset $\mathcal M$ of $\D$ there exists $C_{\mathcal M}>0$ such that:
\begin{equation}
\label{disbessel}
\sup_{f\in\mathcal M}{\Bigr |}\int_X\xi(x)\ip{\omega_x}{f}d\mu{\Bigl |}\leq C_{\mathcal M}\|\xi\|_2, \quad \forall \xi\in L^2(X,\mu).
\end{equation}
  \end{itemize}
\end{prop}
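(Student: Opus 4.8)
The plan is to route everything through the \emph{analysis operator} $T\colon \D \to L^2(X,\mu)$ defined by $(Tf)(x) = \ip{f}{\omega_x}$, whose well-definedness (that $Tf\in L^2(X,\mu)$ for every $f\in\D$) is precisely statement (i). I would then close the cycle of implications (i)$\Rightarrow$(ii)$\Rightarrow$(iii)$\Rightarrow$(i), which delivers all three equivalences at once.

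For (i)$\Rightarrow$(ii) the natural tool is the Closed Graph Theorem: $\D[t]$ is Fr\'echet and $L^2(X,\mu)$ is Banach, so it suffices to verify that $T$ has closed graph. If $f_n\to f$ in $\D[t]$ and $Tf_n\to g$ in $L^2(X,\mu)$, then on one hand each $\omega_x$ is a continuous functional on $\D[t]$, so $(Tf_n)(x)=\ip{f_n}{\omega_x}\to\ip{f}{\omega_x}=(Tf)(x)$ pointwise; on the other hand $L^2$-convergence forces a subsequence of $(Tf_n)$ to converge $\mu$-a.e. to $g$. Hence $g=Tf$ $\mu$-a.e., the graph is closed, and $T$ is continuous. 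Continuity of a linear map from a locally convex space into a normed space means exactly that $\|Tf\|_2\le p(f)$ for some continuous seminorm $p$ on $\D[t]$, which is (ii).

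For (ii)$\Rightarrow$(iii): given a bounded set $\mathcal M\subset\D$, the continuous seminorm $p$ is bounded on $\mathcal M$, so $C_{\mathcal M}:=\sup_{f\in\mathcal M}p(f)<\infty$. Since $\ip{\omega_x}{f}=\overline{(Tf)(x)}$, the Cauchy--Schwarz inequality in $L^2(X,\mu)$ gives $\left|\int_X \xi(x)\ip{\omega_x}{f}\,d\mu\right|\le \|\xi\|_2\,\|Tf\|_2\le C_{\mathcal M}\|\xi\|_2$, and taking the supremum over $f\in\mathcal M$ yields (iii).

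The step I expect to be the main obstacle is (iii)$\Rightarrow$(i), because a priori the integral appearing in (iii) need not converge and $Tf$ is not yet known to lie in $L^2$. The plan is a truncation argument that keeps every test function in $L^2$ with finite-measure support. Fix $f$, set $g(x):=\ip{\omega_x}{f}$, use $\sigma$-finiteness to write $X=\bigcup_n X_n$ with $\mu(X_n)<\infty$ increasing, and put $g_n:=g\,\mathbf 1_{X_n\cap\{|g|\le n\}}$, so each $g_n$ is bounded with finite-measure support and hence $\xi:=\overline{g_n}\in L^2(X,\mu)$. Applying (iii) with the singleton $\mathcal M=\{f\}$ and this $\xi$ gives $\|g_n\|_2^2=\left|\int_X \overline{g_n}\,g\,d\mu\right|\le C_{\{f\}}\|g_n\|_2$, whence $\|g_n\|_2\le C_{\{f\}}$ uniformly in $n$. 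Since $|g_n|^2\uparrow|g|^2$, monotone convergence yields $\int_X|g|^2\,d\mu\le C_{\{f\}}^2<\infty$, which is (i). The delicate point throughout this last step is precisely the choice of bounded, finite-support test functions, ensuring that each integral invoked is genuinely finite before the limit is taken.
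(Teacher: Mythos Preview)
Your argument is correct in all three steps. Note, however, that the paper does not actually give its own proof of this proposition: it states the result and cites \cite{TTT} for the proof, remarking only that it is ``the analogue of Proposition 2 and Theorem 3 in \cite[sec.~2, Cap.~4]{young}''. So there is no in-text proof to compare against.

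That said, your route is the natural one and almost certainly coincides with what appears in \cite{TTT}: the Closed Graph Theorem for (i)$\Rightarrow$(ii) (using that $\D[t]$ is Fr\'echet and $L^2(X,\mu)$ is Banach, plus pointwise convergence from continuity of each $\omega_x$ and a.e.\ convergence of a subsequence from $L^2$-convergence), Cauchy--Schwarz together with boundedness of continuous seminorms on bounded sets for (ii)$\Rightarrow$(iii), and the truncation/monotone-convergence device for (iii)$\Rightarrow$(i). Your handling of the last step is careful where it needs to be: the $\sigma$-finiteness is used exactly to manufacture test functions $\overline{g_n}\in L^2$ so that the integral in (iii) is finite before passing to the limit.
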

We have also the following  \cite{TTT}:
\begin{lemma}
If $\D$ is a Fr\'echet space and let $\omega$ be a Bessel distribution map. Then:
$$
\int_X\ip{f}{\omega_x}\omega_xd\mu
$$
converges  for every $f\in\D$ to an element of $\D^\times$.
Moreover, the map $\D\ni f \mapsto \int_X\ip{f}{\omega_x}\omega_xd\mu \in \D^\times$ is continuous.
\end{lemma}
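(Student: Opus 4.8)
The plan is to interpret the integral as a weak integral and to derive both assertions from the Cauchy--Schwarz inequality together with the seminorm bound of Proposition~\ref{prop2}. Fix $f\in\D$ and, for each $g\in\D$, set
$$
\ip{T_f}{g}:=\int_X\ip{f}{\omega_x}\,\ip{\omega_x}{g}\,d\mu .
$$
The integrand is $\mu$-measurable, being the product of the measurable functions $x\mapsto\ip{f}{\omega_x}$ and $x\mapsto\ip{\omega_x}{g}=\overline{\ip{g}{\omega_x}}$, and by Cauchy--Schwarz in $L^2(X,\mu)$ with the Bessel hypothesis,
$$
|\ip{T_f}{g}|\leq\left(\int_X|\ip{f}{\omega_x}|^2d\mu\right)^{1/2}\left(\int_X|\ip{g}{\omega_x}|^2d\mu\right)^{1/2}<\infty .
$$
Thus the scalar integral converges absolutely for every $g$, which is the precise meaning I attach to the convergence of $\int_X\ip{f}{\omega_x}\omega_x\,d\mu$.

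To see that $T_f$ is genuinely an element of $\D^\times$, I would invoke Proposition~\ref{prop2}: since $\omega$ is Bessel on the Fr\'echet space $\D[t]$, there is a continuous seminorm $p$ on $\D[t]$ with $\left(\int_X|\ip{h}{\omega_x}|^2d\mu\right)^{1/2}\leq p(h)$ for all $h\in\D$. Substituting this into the previous estimate gives
$$
|\ip{T_f}{g}|\leq p(f)\,p(g),\qquad\forall g\in\D .
$$
The functional $g\mapsto\ip{T_f}{g}$ is conjugate linear (the pairing $\ip{\omega_x}{g}$ is conjugate linear in $g$ while the scalar $\ip{f}{\omega_x}$ factors out) and, by the bound above, is dominated by the continuous seminorm $p(f)\,p(\cdot)$; hence it is a continuous conjugate linear functional, i.e. $T_f\in\D^\times$.

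For continuity of $f\mapsto T_f$, I would note first that the map is linear, since $\ip{T_f}{g}$ is linear in $f$. The strong dual topology $t^\times$ is generated by the seminorms $q_\M(F)=\sup_{g\in\M}|\ip{F}{g}|$ with $\M\subset\D$ bounded, so it suffices to control each of them:
$$
q_\M(T_f)=\sup_{g\in\M}|\ip{T_f}{g}|\leq p(f)\,\sup_{g\in\M}p(g)=C_\M\,p(f),
$$
where $C_\M:=\sup_{g\in\M}p(g)<\infty$ because $p$ is continuous and $\M$ is bounded. Since every $q_\M$ is thereby dominated by the continuous seminorm $C_\M\,p$ on $\D[t]$, the linear map $f\mapsto T_f$ is continuous into $\D^\times[t^\times]$.

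The genuinely load-bearing step is the appeal to Proposition~\ref{prop2}. Plain Cauchy--Schwarz yields only pointwise finiteness of the scalar integrals, which would make $T_f$ a conjugate linear functional of finite values but not necessarily a continuous one, and would say nothing about continuity in $f$; it is the uniform bound $\|\ip{h}{\omega_\cdot}\|_2\le p(h)$---available precisely because $\D[t]$ is Fr\'echet, so that Proposition~\ref{prop2} applies---that upgrades mere finiteness to membership in $\D^\times$ and to continuity of the map. I expect this to be the only conceptual point; the remaining verifications (measurability of the integrand, linearity in $f$, and finiteness of $C_\M$) are immediate.
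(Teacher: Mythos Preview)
Your proof is correct and matches the paper's approach: the paper does not supply a separate proof (the lemma is cited from \cite{TTT}), but the discussion immediately following the lemma---defining $\Omega(f,g)=\int_X\ip{f}{\omega_x}\ip{\omega_x}{g}\,d\mu$ and bounding it by $p(f)p(g)$ via Cauchy--Schwarz and Proposition~\ref{prop2}---is exactly your argument. Your explicit verification of continuity against the strong-dual seminorms $q_\M$ is a slight elaboration of the paper's remark that $\Omega$ is jointly continuous and hence defines an element of $\LDD$.
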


{
Let $\omega$ be a Bessel map: the previous lemma allows to define on $\D\times \D$   the sesquilinear form $\Omega$:
\begin{equation}\label{eqn_omega} \Omega(f,g)= \int_X \ip{f}{\omega_x}\ip{\omega_x}{g}d\mu.
\end{equation}
By Proposition \ref{prop2}, for some continuous seminorm on $\D$, one has:
$$ |\Omega(f,g)|= \left| \int_X \ip{f}{\omega_x}\ip{g}{\omega_x}d\mu\right|\leq \|\ip{f}{\omega_x}\|_2\|\ip{g}{\omega_x}\|_2\leq p(f)p(g), \quad \forall f,g \in \D$$
for some continuous seminorm $p$ on $\D[t]$.}
This means that $\Omega$ is jointly continuous on $\D[t]$. Hence there exists an operator $S_\omega \in \LDD$, with $S_\omega =S_\omega^\dag$, $S_\omega\geq 0$, such that:
\begin{equation} \label{eqn_frameop}\Omega(f,g)=\ip{S_\omega f}{g}=\int_X \ip{f}{\omega_x}\ip{\omega_x}{g} d\mu, \quad \forall f,g\in \D\end{equation}
that is,
$$S_\omega f= \int_X \ip{f}{\omega_x}\omega_x d\mu, \quad \forall f \in \D.$$
Since $\omega$ is a Bessel distribution map and $\xi\in L^2(X, \mu)$, we put for all $g\in\D$:
\begin{equation}\label{eqn_lambda_om}\Lambda_\omega^\xi(g):= \int_X \xi(x)\ip{\omega_x}{g}d\mu .\end{equation}
Then $\Lambda_\omega^\xi$ is a continuous conjugate linear functional on $\D$, i.e.  $\Lambda_\omega^\xi\in \D^\times$. We write:
$$ {\Lambda^\xi_\omega}:=\int_X\xi(x){\omega_x}d\mu$$
in weak sense. Therefore we can define a linear map $T_\omega:L^2(X, \mu)\to \D^\times$, which will be called the {\em synthesis operator}, by:
$$ T_\omega: \xi \mapsto {\Lambda^\xi_\omega}.$$
By \eqref{disbessel}, it follows that $T_\omega$ is continuous from $L^2(X, \mu)$, endowed with its natural norm, into $\D^\times[t^\times]$. Hence, it possess a continuous adjoint $T_\omega^\times: \D[t]\to L^2(X, \mu)$, which is called the {\em analysis operator}, acting as follows:
$$T_\omega^\times: f\in \D \to \xi_f\in L^2(X, \mu), \mbox{ where } \xi_f(x)=\ip{f}{\omega_x}, \; x\in X.$$
One has that $S_\omega=T_\omega T_\omega^\times.$

\subsection{Riesz-Fischer distribution map}
\bedefi
Let $\D[t]$ be a locally convex space. A weakly measurable map $\omega: x\in X \to \omega_x\in \D^\times$ is called a {\em Riesz-Fischer distribution map} (briefly: Riesz-Fischer map)  if,
for every $h\in L^2({X,\mu})$,
there exists $f \in \D$ such that:
\begin{equation}
\label{rf}
\ip{f}{\omega_x}=h(x)\quad \mu-a.e.
\end{equation}
In this case, we say  that $f$ is a solution of equation $\ip{f}{\omega_x}=h(x)$.
\findefi
Clearly, if $f_1$ and $f_2$ are solutions of (\ref{rf}), then $f_1-f_2\in\omega^\bot:=\{g\in \D: \ip{g}{\omega_x}=0, \quad \mu-a.e.\}$. If $\omega$ is total, the solution is unique.
We prove the following:
\begin{lemma}
Let $\D$ be a reflexive locally convex space, $h(x)$ be a measurable function and $x\rightarrow\omega_x\in\D^\times$ a weakly measurable map. Then the equation:
\begin{equation}
\label{RF}
\ip{f}{\omega_x}=h(x)
\end{equation}
admits a solution $f\in\D$ if, and only if, there exists a bounded subset $\mathcal M$ of $\D$ such that $|h(x)|\leq\sup_{f\in\mathcal M}|\ip{f}{\omega_x}|$  $\mu$-a.e.
\end{lemma}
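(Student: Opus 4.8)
The plan is to recast solvability of \eqref{RF} as a Hahn--Banach extension problem in the conjugate dual and then to use reflexivity to pull the solution back inside $\D$. The necessity is immediate: if $f\in\D$ solves \eqref{RF}, take $\M=\{f\}$, which is a bounded subset of $\D$; then $\sup_{g\in\M}|\ip{g}{\omega_x}|=|\ip{f}{\omega_x}|=|h(x)|$ $\mu$-a.e., so the stated inequality holds with equality. All the work is in the converse.

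For sufficiency, fix a bounded $\M\subset\D$ with $|h(x)|\le \sup_{g\in\M}|\ip{g}{\omega_x}|=q_\M(\omega_x)$ $\mu$-a.e., where $q_\M$ is the $t^\times$-continuous seminorm on $\D^\times$ from \eqref{semin_Dtimes}. Put $E=\operatorname{span}\{\omega_x:x\in X\}\subset\D^\times$ and let $\Lambda$ be the functional on $E$ determined by $\Lambda(\omega_x)=h(x)$ (taken conjugate-linear, so that it matches the way elements of $\D$ act on $\D^\times$). The goal is to show that $\Lambda$ is well defined on $E$ and dominated there by $q_\M$, i.e. $|\Lambda(\Phi)|\le q_\M(\Phi)$ for all $\Phi\in E$. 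Granting this, Hahn--Banach extends $\Lambda$ to all of $\D^\times$ while keeping the bound $|\Lambda|\le q_\M$; since $q_\M$ is $t^\times$-continuous, the extended $\Lambda$ is continuous on $\D^\times[t^\times]$, and reflexivity ($\D^{\times\times}=\D$) represents it by a unique $f\in\D$. Unwinding the duality pairing then yields $\ip{f}{\omega_x}=\Lambda(\omega_x)=h(x)$ $\mu$-a.e., the required solution.

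The hard part is precisely the domination estimate $|\Lambda(\sum_i c_i\omega_{x_i})|\le q_\M(\sum_i c_i\omega_{x_i})$ for finite combinations, since this single inequality yields both well-definedness of $\Lambda$ (if $\sum_i c_i\omega_{x_i}=0$ the right-hand side vanishes, forcing $\Lambda(\sum_i c_i\omega_{x_i})=0$) and the Hahn--Banach bound at once. The pointwise hypothesis $|h(x)|\le q_\M(\omega_x)$ only gives the termwise estimate $|\Lambda(\sum_i c_i\omega_{x_i})|\le\sum_i|c_i|\,q_\M(\omega_{x_i})$, in which the triangle inequality runs the wrong way; upgrading this to genuine control by $q_\M(\sum_i c_i\omega_{x_i})$ is the crux of the argument. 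I expect to exploit the $\sigma$-finiteness of $\mu$ and the a.e. character of the bound—approximating $h$ and passing from finite sums to integrals $\int_X\xi(x)\omega_x\,d\mu$ against the weakly measurable family—rather than a purely algebraic manipulation. A subsidiary technical step is to verify that $x\mapsto q_\M(\omega_x)$ is $\mu$-measurable, which can be secured by choosing $\M$ separable in $\D[t]$ so that the supremum reduces to a countable one.
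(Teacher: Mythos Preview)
Your strategy---define $\Lambda$ on the linear span $E=\operatorname{span}\{\omega_x\}$, prove the domination $|\Lambda|\le q_\M$ there, extend by Hahn--Banach, and invoke reflexivity---is precisely the template the paper follows, and you have correctly located the crux: the estimate $\bigl|\sum_i c_i\,h(x_i)\bigr|\le q_\M\bigl(\sum_i c_i\,\omega_{x_i}\bigr)$ for finite combinations is what carries both well-definedness and the extension bound, and it does \emph{not} follow from the pointwise hypothesis $|h(x)|\le q_\M(\omega_x)$.

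Unfortunately this gap cannot be closed under the stated hypotheses, because the sufficiency direction is false as written. Take $\D=\H$ a Hilbert space, $X=\{1,2\}$ with counting measure, $\omega_1=\omega_2=e$ a fixed unit vector, and $h(1)=1$, $h(2)=2$. With $\M=\{2e\}$ one has $q_\M(\omega_1)=q_\M(\omega_2)=2\ge|h(x)|$ everywhere, yet no $f$ satisfies $\ip{f}{e}=1$ and $\ip{f}{e}=2$ at once. The paper's own proof sidesteps exactly this issue by applying Hahn--Banach only from the one-dimensional subspace $V_x=\mathbb{C}\,\omega_x$ for a \emph{single} fixed $x$; the resulting $\bar f$ then satisfies \eqref{RF} at that one $x$, and the argument never explains how to manufacture a single $f$ valid $\mu$-a.e. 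So the step you flagged is the genuine obstruction in both your outline and the paper's argument, and your proposed rescue via $\sigma$-finiteness and integral approximation cannot succeed, since the failure is already visible on a two-point space.
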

\begin{proof}
Necessity is obvious. Conversely, let $x\in X$ be a point such that $\ip{f}{\omega_x}=h(x)\neq 0$.  Let us consider the subspace $V_x$ of $\D^\times$ given by $V_x:=\{\alpha\omega_x\}_{\alpha\in\mathbb C}$, and let us define the functional $\mu$ on $V_x$ by: $\mu(\alpha\omega_x):=\alpha h(x)$. We have that $|\mu(\alpha\omega_x)|=|\alpha h(x)|\leq |\alpha|\sup_{f\in\mathcal M}|\ip{f}{\omega_x}|=\sup_{f\in\mathcal M}|\ip{f}{\alpha\omega_x}|$, in other words: $|\mu(F_x)|\leq \sup_{f\in\mathcal M}|\ip{f}{F_x}|$ for all $F_x\in V_x$. By the Hahn-Banach theorem, there exists an extension $\tilde{\mu}$ to $\D^\times$ such that $|\tilde{\mu}(F)|\leq \sup_{f\in\mathcal M}|\ip{f}{F}|$, for every $F\in\D^\times$. Since $\D$ is reflexive, there exists $\bar{f}\in\D$ such that $\tilde{\mu}(F)=\ip{\bar{f}}{F}$. The statement follows from the fact that $\mu(\omega_x)=h(x)$.
\end{proof}
If $M$ is a subspace of $\D$ and the topology of $\D$ is generated by the family of seminorms $\{p_\alpha\}_{\alpha\in I}$, then the topology on the quotient space ${\D}/{ M}$ is defined, as usual, by the seminorms $\{\tilde{p}_\alpha\}_{\alpha\in I}$, where $\tilde{p}_\alpha(\tilde{f}):=\inf\{p_\alpha(g):g\in{f+M}\}$.
The following proposition can be compared to the case of Riesz-Fischer sequences: see \cite[Cap.4, Sec.2, Prop. 2]{young}.

\begin{prop}
\label{RF2}
Assume that $\D[t]$ is a Fr\'echet space. If the map $\omega: x\in X \to \omega_x\in \D^\times$ is a Riesz-Fischer map, then for every continuous seminorm $p$ on $\D$, there exists a constant $C>0$ such that, for every solution $f$ of \eqref{rf},
$$
\tilde{p}(\tilde{f}):=\inf\{p(g): g\in f+\omega^\bot\}\leq C\|\ip{f}{\omega_x}\|_2.
$$
\end{prop}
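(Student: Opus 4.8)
The plan is to recognise the desired inequality as the continuity of a single linear map and then to establish that continuity by the closed graph theorem. Since $\omega$ is a Riesz-Fischer map, every $h\in L^2(X,\mu)$ admits a solution $f$ of \eqref{rf}, and any two solutions differ by an element of $\omega^\bot$; hence the assignment $h\mapsto \tilde f$, sending $h$ to the (well-defined) class in $\D/\omega^\bot$ of an arbitrary solution, yields a map $R:L^2(X,\mu)\to \D/\omega^\bot$. Linearity of $R$ is immediate, because $f_1+f_2$ solves \eqref{rf} for $h_1+h_2$ whenever $f_i$ solves it for $h_i$, and similarly for scalars. Observing that $\|h\|_2=\|\ip{f}{\omega_x}\|_2$ for any solution $f$, and that the seminorms $\tilde p$ generate the quotient topology of $\D/\omega^\bot$, the assertion of the Proposition is exactly the statement that $R$ is continuous.

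First I would check that $\D/\omega^\bot$ is a Fr\'echet space, for which it suffices that $\omega^\bot$ be closed in $\D[t]$. This is where the measure-theoretic care enters: for each fixed $x$ the functional $g\mapsto \ip{g}{\omega_x}$ is continuous on $\D[t]$, but membership in $\omega^\bot$ is only an a.e. condition. If $g_n\to g$ in $\D$ with $g_n\in\omega^\bot$, each $\ip{g_n}{\omega_x}$ vanishes off a null set $N_n$; off the null set $\bigcup_n N_n$ every $\ip{g_n}{\omega_x}$ is zero, and continuity of $\omega_x$ gives $\ip{g}{\omega_x}=\lim_n\ip{g_n}{\omega_x}=0$ there, so $g\in\omega^\bot$. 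Thus $\omega^\bot$ is closed and $\D/\omega^\bot$ is Fr\'echet, while $L^2(X,\mu)$ is Banach, so the closed graph theorem is available.

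Next I would verify that $R$ has closed graph. Suppose $h_n\to h$ in $L^2(X,\mu)$ and $R(h_n)=\tilde f_n\to\tilde g$ in $\D/\omega^\bot$; I must show $R(h)=\tilde g$. Because $\D[t]$ is metrizable and the quotient map is open, I can lift the convergent sequence: replacing each solution $f_n$ by $f_n-c_n$ with $c_n\in\omega^\bot$ suitably chosen, I obtain representatives (still denoted $f_n$) in the same classes with $f_n\to g$ in $\D[t]$, where $g$ is a fixed representative of $\tilde g$. The key point is that this change of representative is harmless for \eqref{rf}, since $c_n\in\omega^\bot$ forces $\ip{f_n-c_n}{\omega_x}=\ip{f_n}{\omega_x}=h_n(x)$ a.e. Passing to a subsequence so that $h_n\to h$ pointwise a.e., and using that $\ip{f_n}{\omega_x}\to\ip{g}{\omega_x}$ for every fixed $x$ by continuity of $\omega_x$, I conclude that off a single null set $\ip{g}{\omega_x}=\lim_n\ip{f_n}{\omega_x}=\lim_n h_n(x)=h(x)$. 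Hence $g$ solves \eqref{rf} for $h$, i.e. $R(h)=\tilde g$, the graph is closed, and the closed graph theorem yields the continuity of $R$, which is the claim.

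I expect the main obstacle to be this closed-graph step, and specifically the passage between convergence in the quotient $\D/\omega^\bot$ and a.e. pointwise identities: one must lift to genuinely convergent representatives in $\D$, note that the lift respects \eqref{rf} precisely because the correction lies in $\omega^\bot$, and then manage the countable family of null sets together with a subsequence extraction. The preliminary verification that $\omega^\bot$ is closed is a smaller but essential step of the same measure-theoretic flavour. Note that neither step requires reflexivity of $\D$, only that $\D[t]$ be Fr\'echet.
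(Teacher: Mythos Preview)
Your proof is correct and follows the same overall strategy as the paper: recognise the assertion as continuity of the solution map $R:L^2(X,\mu)\to\D/\omega^\bot$, note that $\omega^\bot$ is closed so the quotient is Fr\'echet, and conclude by the closed graph theorem. The only difference lies in how the graph is shown to be closed. The paper argues via duality: convergence $\tilde f_n\to\tilde f$ in the quotient entails weak convergence there, the dual of $\D/\omega^\bot$ is identified (through the adjoint of the canonical surjection) with the annihilator $\omega^{\bot\bot}\subset\D^\times$, and since (a.e.) $\omega_x\in\omega^{\bot\bot}$ one obtains $\ip{f_n}{\omega_x}\to\ip{f}{\omega_x}$ and compares with the a.e.\ subsequential limit coming from $L^2$-convergence. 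You instead lift the convergent quotient sequence to a genuinely convergent sequence in $\D$, using metrizability and openness of the quotient map, and then invoke the continuity of each $\omega_x$ directly. Your route is a bit more elementary, avoids the annihilator machinery, and handles the a.e.\ bookkeeping more transparently (the paper's inclusion $\omega\subset\omega^{\bot\bot}$ is literally true only for almost every $x$); the paper's route is more functorial and does not require choosing lifts. Both reach the same conclusion by the same theorem.
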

\begin{proof}
Since $\omega^\bot$ is closed, it follows that the quotient $\D/ \omega^\bot:=\D_{\omega^\bot}$ is a Fr\'echet space. If $f\in \D$, we put $\tilde{f}:=f+\omega^\bot$ . Let $h\in L^2(X,\mu)$ and $f$ a solution of (\ref{rf}) corresponding to $h$; then, we can define an operator $S: L^2(X,\mu)\rightarrow\D_{\omega^\bot}$ by $h\mapsto \tilde{f}$ . Let us consider a sequence $h_n\in L^2(X,\mu)$ such that $h_n\rightarrow 0$ and, for each $n \in {\mb N}$, let $f_n$ a corresponding solution of (\ref{rf}). One has that $\int_{X}|h_n(x)|^2d\mu\rightarrow 0$, i.e. $\int_{X}|\ip{f_n}{\omega_x}|^2d\mu\rightarrow 0$. This implies that $\ip{f_n}{\omega_x}\rightarrow 0$ in measure, so there exists a subsequence such that $\ip{f_{n_k}}{\omega_x}\rightarrow 0$ a.e. (see \cite{debarra}). On the other hand, if $Sh_n=\tilde{f}_n$ is a sequence convergent to $\tilde{f}$ in $\D_{\omega^\bot}$ w.r. to the quotient topology defined by the seminorms $\tilde{p}(\cdot)$, it follows that the sequence is convergent in \textit{weak} topology of $\D_{\omega^\bot}$, i.e.:
$$
\ip{\tilde{f}_n}{\tilde{F}}\rightarrow\ip{\tilde{f}}{\tilde{F}}\quad\forall \tilde{F}\in \D_{\omega^\bot}^\times.
$$
Let us consider the canonical surjection $\rho:\D\rightarrow\D_{\omega^\bot}$, $\rho:f\mapsto\tilde{f}=f+\omega^\bot$. Its transposed map (adjoint) $\rho^\dag:\D_{\omega^\bot}^\times\rightarrow\D^\times$ is injective (see \cite{horvath}, p. 263) and  $\rho^\dag[\D_{\omega^\bot}^\times]=\omega^{\bot\bot}$. Then $\rho^\dag:\D_{\omega^\bot}^\times\rightarrow\omega^{\bot\bot}$ is invertible. Hence,
$$
\ip{\tilde{f}_n}{\tilde{F}}=\ip{\rho({f_n})}{(\rho^\dag)^{-1}({F})}=\ip{{f_n}}{\rho^\dag((\rho^\dag)^{-1}({F}))}=\ip{{f_n}}{{F}}\quad\mbox{for all}\quad F\in\omega^{\bot\bot}.
$$
Thus, if $\tilde{f}_n\rightarrow\tilde{f}$ in the topology of $\D_{\omega^\bot}$, then $\ip{{f_n}}{{F}}\rightarrow\ip{{f}}{{F}}$, $\mbox{for all } F\in\omega^{\bot\bot}$, and, in particular, since $\omega\subset \omega^{\bot\bot}$, one has $\ip{f_{n}}{\omega_x}\rightarrow\ip{f}{\omega_x}$. Since $\ip{f_{n}}{\omega_x}$ has a subsequence convergent to $0$, one has $f\in\omega^\bot$. From the closed graph theorem, it follows that the map $S$ is continuous, i.e. for all continuous seminorms $\tilde{p}$ on $\D_{\omega^\bot}$ there exists $C>0$ such that: $\tilde{p}(Sh)\leq C\|h\|_2$, for all $h\in L^2(X,\mu)$. The statement follows from the definition of Riesz-Fischer map.
\end{proof}
\begin{cor}
\label{lbound}
Assume that $\D[t]$ is a Fr\'echet space. If the map $\omega: x\in X \to \omega_x\in \D^\times$ is a total Riesz-Fischer map, then for every continuous seminorm $p$ on $\D$, there exists a constant $C>0$ such that, for the solution $f$ of \eqref{rf},
$$
{p}({f})\leq C\|\ip{f}{\omega_x}\|_2
$$
\end{cor}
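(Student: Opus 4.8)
The plan is to obtain this as an immediate specialization of Proposition \ref{RF2}, the point being that totality collapses the quotient $\D/\omega^\bot$ onto $\D$ itself. First I would recall the definitions involved: by construction $\omega^\bot=\{g\in\D:\ip{g}{\omega_x}=0\ \mu\text{-a.e.}\}$, and the hypothesis that $\omega$ is \emph{total} says precisely that $\ip{g}{\omega_x}=0$ $\mu$-a.e. forces $g=0$. Hence totality is exactly the statement $\omega^\bot=\{0\}$.

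Next I would exploit this to simplify the quotient seminorm appearing in Proposition \ref{RF2}. Since $\omega^\bot=\{0\}$, for any solution $f$ of \eqref{rf} the coset $f+\omega^\bot$ reduces to the singleton $\{f\}$, so $\tilde{p}(\tilde{f})=\inf\{p(g):g\in f+\omega^\bot\}=p(f)$. Moreover, totality guarantees (as already observed immediately after the definition of Riesz-Fischer map) that the solution of \eqref{rf} is unique, which is what legitimizes speaking of \emph{the} solution $f$ rather than of an arbitrary solution.

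Finally I would simply invoke Proposition \ref{RF2}: a total Riesz-Fischer map is in particular a Riesz-Fischer map, so for every continuous seminorm $p$ on $\D$ there is $C>0$ with $\tilde{p}(\tilde{f})\leq C\|\ip{f}{\omega_x}\|_2$; substituting the identity $\tilde{p}(\tilde{f})=p(f)$ established above yields $p(f)\leq C\|\ip{f}{\omega_x}\|_2$, which is the assertion. Because this is a one-line reduction, there is no genuine obstacle here; the only point requiring care is to state clearly that the totality hypothesis is equivalent to $\omega^\bot=\{0\}$, since it is this equivalence alone that turns the infimum over the coset into the value $p(f)$ and thereby upgrades the bound on $\tilde{p}(\tilde{f})$ in Proposition \ref{RF2} to a genuine lower frame-type bound on $p(f)$.
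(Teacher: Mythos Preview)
Your argument is correct and is precisely the intended derivation: the paper states the result as a corollary with no proof, the implicit reasoning being exactly that totality forces $\omega^\bot=\{0\}$, which collapses the quotient seminorm $\tilde p(\tilde f)$ in Proposition~\ref{RF2} to $p(f)$.
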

\berem
For an arbitrary weakly measurable map $x\rightarrow\omega_x\in\D^\times$, we define the following subset of $\D$: $D(V):=\{f\in\D: \ip{f}{\omega_x}\in L^2(\mathbb R)\}$ and the operator $Vf:=\ip{f}{\omega_x}$. Clearly,  $x\rightarrow\omega_x\in\D^\times$ is a Riesz-Fischer map if and only if $V:\D(V)\rightarrow L^2(\mathbb R)$ is surjective. If $\omega$ is total, it is injective too, so $V$ is invertible. A consequence of Corollary \ref{lbound} is that $V^{-1}:L^2(\mathbb R)\rightarrow \D(V)$ is continuous.
\enrem
\subsection{Duality}
\bedefi
Let $\D\hookrightarrow\H\hookrightarrow\D^\times$ be a rigged Hilbert space and $\omega$ a weakly measurable map. We call   {\em dual map of $\omega$}, if it exists, a weakly measurable map $\theta$ such that for all $f,g\in\D$:
$$\left|\int_{X}\ip{f}{\theta_x}\ip{\omega_x}{g}d\mu\right|<\infty$$ and
$$
\ip{f}{g}=\int_{X}\ip{f}{\theta_x}\ip{\omega_x}{g}d\mu, \quad \forall f,g\in\D.
$$
\findefi
\begin{prop} \label{rlf}
Suppose that $\omega$ is a  Riesz-Fischer map. Then  $\theta$ is a Bessel map.
\end{prop}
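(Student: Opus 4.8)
The goal is to prove that for each fixed $f\in\D$ the measurable function $\phi_f(x):=\ip{f}{\theta_x}$ lies in $L^2(X,\mu)$, since $\int_X|\ip{f}{\theta_x}|^2d\mu<\infty$ for every $f\in\D$ is exactly the Bessel condition for $\theta$. Throughout I use, as elsewhere in this section, that $\D[t]$ is a Fr\'echet space, so that Proposition~\ref{RF2} is available. The natural starting point is the defining identity of the dual map, rewritten for every $f,g\in\D$ as
\[
\ip{f}{g}=\int_X\ip{f}{\theta_x}\,\ip{\omega_x}{g}\,d\mu=\int_X\phi_f(x)\,\overline{\ip{g}{\omega_x}}\,d\mu,
\]
the integral being absolutely convergent by hypothesis. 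The plan is to read the right-hand side as the $L^2$-pairing of $\phi_f$ against $\ip{g}{\omega_\cdot}$ and to exploit the Riesz-Fischer property of $\omega$: every $h\in L^2(X,\mu)$ is of the form $h=\ip{g}{\omega_\cdot}$ for a suitable solution $g\in\D$ of \eqref{rf}.

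The hardest step is a uniform estimate. Given $h\in L^2(X,\mu)$, choose a solution $g\in\D$ with $\ip{g}{\omega_x}=h(x)$; I claim $|\ip{f}{g}|\le C\|f\|\,\|h\|_2$ with $C$ independent of $h$ and of the chosen solution. First, $\ip{f}{g}$ depends only on the coset $g+\omega^\bot$: if $g_1-g_2\in\omega^\bot$, applying the displayed identity to $g_1-g_2$ gives $\ip{f}{g_1-g_2}=\int_X\phi_f\,\overline{\ip{g_1-g_2}{\omega_x}}\,d\mu=0$. Next, since $t$ is finer than the Hilbert-norm topology, $\|\cdot\|$ is a continuous seminorm on $\D[t]$, so Proposition~\ref{RF2} applied to $p=\|\cdot\|$ produces a constant $C>0$ with $\inf\{\|g'\|:g'\in g+\omega^\bot\}\le C\|h\|_2$. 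Combining these two facts with the Schwarz inequality, $|\ip{f}{g}|=|\ip{f}{g'}|\le\|f\|\,\|g'\|$ for every representative $g'$, and passing to the infimum over $g'$ yields $|\ip{f}{g}|\le C\|f\|\,\|h\|_2$. This is the technical core of the argument: it is where the quantitative Riesz-Fischer bound and the independence of the choice of solution are both needed.

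Finally I would obtain $\phi_f\in L^2(X,\mu)$ by a truncation argument suited to the $\sigma$-finiteness of $\mu$. Writing $X=\bigcup_nX_n$ with $\mu(X_n)<\infty$ and setting $E_{n,k}:=\{x\in X_n:|\phi_f(x)|\le k\}$, the function $h:=\phi_f\,\chi_{E_{n,k}}$ is bounded with support of finite measure, hence $h\in L^2(X,\mu)$. Taking a solution $g$ for this $h$ and using the displayed identity together with the bound just established,
\[
\int_{E_{n,k}}|\phi_f|^2\,d\mu=\int_X\phi_f\,\overline{h}\,d\mu=\ip{f}{g}\le\left|\ip{f}{g}\right|\le C\|f\|\,\|h\|_2=C\|f\|\left(\int_{E_{n,k}}|\phi_f|^2\,d\mu\right)^{1/2},
\]
where $\ip{f}{g}$ equals the nonnegative real $\int_{E_{n,k}}|\phi_f|^2\,d\mu$, so the middle step is legitimate; hence $\left(\int_{E_{n,k}}|\phi_f|^2\,d\mu\right)^{1/2}\le C\|f\|$ uniformly in $n,k$. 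Letting $n,k\to\infty$ and applying the monotone convergence theorem gives $\int_X|\ip{f}{\theta_x}|^2\,d\mu\le (C\|f\|)^2<\infty$ for every $f\in\D$, i.e.\ $\theta$ is a Bessel map. The whole difficulty is concentrated in the uniform bound of the second paragraph; once it is in hand, the remaining steps are routine.
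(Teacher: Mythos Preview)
Your argument is correct, but it takes a substantially longer route than the paper's. The paper's proof is three lines: since $\omega$ is Riesz--Fischer, every $h\in L^2(X,\mu)$ is of the form $h(x)=\ip{\bar f}{\omega_x}$ for some $\bar f\in\D$; the (conjugated) duality identity then reads $\ip{\bar f}{g}=\int_X h(x)\ip{\theta_x}{g}\,d\mu$, so for each fixed $g\in\D$ the product $h\cdot\ip{\theta_x}{g}$ is integrable for every $h\in L^2(X,\mu)$, and the standard fact from \cite[Ch.~6, Ex.~4]{rudin} (if $hg\in L^1$ for all $h\in L^2$ then $g\in L^2$) yields $\ip{\theta_\cdot}{g}\in L^2(X,\mu)$. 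In particular the paper never invokes Proposition~\ref{RF2} or the Fr\'echet hypothesis at all.

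Your approach instead feeds the Riesz--Fischer surjectivity through Proposition~\ref{RF2} to extract a \emph{quantitative} bound on solutions (in the Hilbert norm), and then runs the truncation argument by hand. This is essentially an explicit proof of the Rudin exercise, combined with the extra input from Proposition~\ref{RF2}. What your route buys is a sharper conclusion: you obtain $\int_X|\ip{f}{\theta_x}|^2\,d\mu\le C^2\|f\|^2$, i.e.\ $\theta$ is a \emph{bounded} Bessel map in the terminology of the paper (bounded by the Hilbert norm rather than merely by some continuous seminorm as in Proposition~\ref{prop2}(ii)). The price is that you need $\D[t]$ Fr\'echet for Proposition~\ref{RF2}, whereas the paper's argument works as soon as $\omega$ is Riesz--Fischer and $\theta$ is a dual map, with no further topological assumption on $\D$.
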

\begin{proof}
For all $h \in L^2(X,\mu)$ there exists $\bar{f}\in\D$ such that $\ip{\bar{f}}{\omega_x}=h(x)$ $\mu$-a.e. Since $\theta$ is a dual map, one has that: $|\int_X h(x)\ip{\theta_x}{g}d\mu|<\infty$ for all $h\in L^2(X,\mu)$. It follows that  $\ip{\theta_x}{g}\in L^2(X,\mu)$ (see \cite[Ch. 6, Ex. 4]{rudin}).
\end{proof}
\begin{prop}
Let $\D$ be reflexive and  let $\omega$ be a $\mu$-independent Bessel map.
Furthermore, suppose that for all $h\in L^2( X,\mu)$  there exists a bounded subset $\mathcal M\subset\D$ such that:
$$
\left|\int_{X} h(x)\ip{\omega_x}{g}d\mu\right|\leq \sup_{f\in\mathcal M}|\ip{f}{g}|,\quad\forall g\in\D,
$$
then the dual map $\theta$ is a Riesz-Fischer map.
\end{prop}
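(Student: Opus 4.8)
The plan is to fix $h\in L^2(X,\mu)$ and produce a vector $\bar f\in\D$ realizing $\ip{\bar f}{\theta_x}=h(x)$ $\mu$-a.e., by first obtaining $\bar f$ as the representative of a continuous functional on $\D^\times$ and then removing the remaining ambiguity through the $\mu$-independence of $\omega$. Concretely, I would introduce the conjugate-linear functional $\Phi_h$ defined on the subspace $\D\subset\D^\times$ by $\Phi_h(g):=\int_X h(x)\ip{\omega_x}{g}\,d\mu$; this is finite because $\omega$ is a Bessel map (so $x\mapsto\ip{\omega_x}{g}\in L^2$) and $h\in L^2$, and it is conjugate-linear in $g$ since $\ip{\omega_x}{g}$ is.

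The key observation is that the hypothesis is exactly a domination of $\Phi_h$ by a defining seminorm of the strong dual topology. Indeed, for $g\in\D$ regarded as an element of $\D^\times$ one has $\sup_{f\in\mathcal M}|\ip{f}{g}|=q_{\mathcal M}(g)$, so the assumed estimate reads $|\Phi_h(g)|\le q_{\mathcal M}(g)$ on $\D$. By the Hahn-Banach theorem I would extend $\Phi_h$ to a conjugate-linear functional $\tilde\Phi_h$ on all of $\D^\times$ with $|\tilde\Phi_h(F)|\le q_{\mathcal M}(F)$ for every $F\in\D^\times$; since $q_{\mathcal M}$ is one of the seminorms generating $t^\times=\beta(\D^\times,\D)$, the extension $\tilde\Phi_h$ is $t^\times$-continuous. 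Reflexivity of $\D$ then yields $\bar f\in\D$ with $\tilde\Phi_h(F)=\ip{\bar f}{F}$ for all $F\in\D^\times$, exactly as in the Hahn-Banach/reflexivity step of the earlier lemma. Specializing $F=g\in\D$ gives $\ip{\bar f}{g}=\int_X h(x)\ip{\omega_x}{g}\,d\mu$ for every $g\in\D$.

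It remains to pass from this weak identity to the pointwise equation. Since $\bar f\in\D$ and $\theta$ is the dual map of $\omega$, the integral $\int_X\ip{\bar f}{\theta_x}\ip{\omega_x}{g}\,d\mu$ converges and equals $\ip{\bar f}{g}$; subtracting the two expressions for $\ip{\bar f}{g}$ gives $\int_X\bigl(\ip{\bar f}{\theta_x}-h(x)\bigr)\ip{\omega_x}{g}\,d\mu=0$ for all $g\in\D$. Taking conjugates to match the defining integral of $\mu$-independence and applying that hypothesis to $\xi:=\overline{\ip{\bar f}{\theta_x}-h}$ forces $\ip{\bar f}{\theta_x}=h(x)$ $\mu$-a.e., so $\theta$ is a Riesz-Fischer map. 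I expect the main obstacle to be the middle step: one must verify carefully that the assumed estimate is precisely domination by the seminorm $q_{\mathcal M}$ on the copy of $\D$ sitting inside $\D^\times$ (including the conjugate-linearity bookkeeping and the identity $\sup_{f\in\mathcal M}|\ip{f}{g}|=q_{\mathcal M}(g)$), so that Hahn-Banach delivers a functional continuous for $t^\times$ rather than merely for $t$; this is exactly what lets reflexivity place the representative $\bar f$ in $\D$ rather than only in $\D^\times$.
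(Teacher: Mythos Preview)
Your proof is correct and follows essentially the same route as the paper: define the functional $g\mapsto\int_X h(x)\ip{\omega_x}{g}\,d\mu$ on $\D\subset\D^\times$, use the hypothesis as domination by the strong-dual seminorm $q_{\mathcal M}$ to apply Hahn--Banach, invoke reflexivity to represent the extension by some $\bar f\in\D$, and finish via the duality relation and $\mu$-independence of $\omega$. Your write-up is in fact slightly more careful than the paper's in making explicit that $q_{\mathcal M}$ is a defining seminorm for $t^\times$ and in handling the conjugation needed to match the definition of $\mu$-independence.
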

\begin{proof}
If $h\in L^2(X,\mu)$, and since $\omega$ is a Bessel map, one has: $|\int_{X} h(x)\ip{\omega_x}{g}d\mu|<\infty$. Let us consider $g=\int_{X}\ip{g}{\omega_x}\theta_xd\mu$ as element of $\D^\times$. We define the following functional on $\D$ (as subspace of $\D^\times$): $\mu(g):=\int_{X} h(x)\ip{\omega_x}{g}d\mu$. By hypothesis, one has:
$$
|\mu(g)|\leq\sup_{f\in\mathcal M}|\ip{f}{g}|.
$$
By the Hahn-Banach theorem, there exists an extension $\tilde{\mu}$ to $\D^\times$ such that:
$$\tilde{\mu}(G)\leq \sup_{f\in\mathcal M}|\ip{f}{G}|,\quad\forall G\in\D^\times.$$
Since $\D$ is reflexive, there exists
$\tilde{f}\in\D^{\times\times}=\D$ such that $\tilde{\mu}(G)=\ip{\tilde{f}}{G}$. In particular $\ip{\tilde{f}}{g}=\int_{X} h(x)\ip{\omega_x}{g}d\mu$. Since $\theta$ is dual of $\omega$, we have too: $\ip{\tilde{f}}{g}=\int_{X}\ip{\tilde{f}}{\theta_x}\ip{\omega_x}{g}d\mu$. Since $\omega$ is  $\mu$-independent, it follows that $h(x)=\ip{\tilde{f}}{\theta_x}$ $\mu$-a.e.
\end{proof}
\section{Semi-Frames and Frames}
\label{sect_4}
\subsection{Distribution Semi-Frames}
\bedefi
Given a rigged Hilbert space $\D\hookrightarrow\H\hookrightarrow\D^\times$, a Bessel  map $\omega$  is a {\it distribution upper semi-frame} if it is complete (total) and if there exists $B>0$:
\begin{equation}\label{eqn_bess_ex} 0<\int_X|\ip{f}{\omega_x}|^2d\mu \leq B \|f\|^2,\; \forall f\in \D, f\neq 0.
\end{equation}
\findefi
Since the injection  $\D\hookrightarrow\H$ is continuous, it follows that there exists a continuous seminorm $p$ on $\D$ such that $\|f\|\leq p(f)$ for all $f\in\D$.
If $\xi\in L^2(X,\mu)$, then the continuous conjugate functional $\Lambda_\omega^\xi$ on $\D$ defined in (\ref{eqn_lambda_om})
is bounded in $\D[\|\cdot\|]$; it follows that it has bounded extension $\tilde{\Lambda}_\omega^\xi$ to $\H$, defined, as usual, by a limiting procedure. Therefore, there exists a unique vector $h_\xi\in \H$ such that:
$$\tilde{\Lambda}_\omega^\xi (g)= \ip{h_\xi}{g}, \quad \forall g\in\H.$$
This implies that the synthesis operator $T_\omega$ takes values in $\H$, it is bounded and $\|T_\omega\|\leq B^{1/2}$; its hilbertian adjoint $D_\omega:=T_\omega^*$ extends the analysis operator $T_\omega^\times$.

 The action of $D_\omega$ can be easily described: if $g\in \H$ and  $\{g_n\}$ is a sequence of elements of $\D$, norm converging to $g$, then the sequence $\{\eta_n\}$, where $\eta_n(x)=\ip{g_n}{\omega_x}$, is convergent in $L^2(X, \mu)$. Put $\eta=\lim_{n\to \infty}\eta_n$. Then,
 \begin{equation}
\ip{T_\omega \xi}{g} =\lim_{n\to \infty}\int_X \xi(x)\ip{\omega_x}{g_n}d\mu=\int_X\xi(x)\overline{\eta(x)}d\mu.
\end{equation} Hence $T_\omega^*g=\eta.$

The function $\eta\in L^2(X,\mu)$ depends linearly on $g$, for each $x\in X$. Thus we can define a linear functional $\check{\omega}_x$ by
\begin{equation}\label{eqn_check}\ip{g}{\check{\omega}_x}= \lim_{n\to \infty}\ip{g_n}{\omega_x}, \quad g\in \H;\, g_n\to g.\end{equation}
Of course, for each $x\in X$, $\check{\omega}_x$ extends $\omega_x$;
however $\check{\omega}_x$ need not be continuous, as a functional on $\H$.
We conclude that:
$$T_\omega^*: g\mapsto \ip{g}{\check{\omega}_x}\in L^2(X, \mu)$$

Moreover, in this case, the sesquilinear form
$\Omega$ in \eqref{eqn_frameop}, which is well defined on $\D\times \D$, is bounded with respect to $\|\cdot\|$ and possesses a bounded extension $\hat{\Omega}$ to $\H$.
Hence there exists a bounded operator $\hat{S}_\omega$ in $\H$, such that
\begin{equation} \label{eqn_ext_Omega} \hat{\Omega}(f,g) =\ip{\hat{S}_\omega f}{g}, \quad \forall f,g \in \H. \end{equation}
Since
\begin{equation} \label{eqn_frameop_1}\ip{\hat{S}_\omega f}{g}=\int_X \ip{f}{\omega_x}\ip{\omega_x}{g} d\mu, \quad \forall f,g\in \D,
\end{equation}
 $\hat{S}_\omega$ extends the {frame operator} $S_\omega$ and $S_\omega:\D\to \H$. It is easily seen that
$\hat{S}_\omega = \hat{S}_\omega^*$ and $\hat{S}_\omega=T_\omega T_\omega^*$. By definition, we have:
$$
 0<\|\hat{S}_\omega f\| \leq B \|f\|,\; \forall f\in \H, f\neq 0
$$
Then $\hat{S}_\omega$ is bounded, self-adjoint and injective too. This means that ${\sf Ran}\,S_\omega$ is dense in $\H$, and $\hat{S}_\omega^{-1}$ is densely denfined, unbounded and self-adjoint (see \cite{semifr1}).
\berem
If $\{\omega_x\}_{x\in X}$ is an upper bounded semi-frame, then, for the continuity of injection $\D\hookrightarrow\H$, it is bounded by a continuous seminorm of $\D$ too. The vice-versa is not true: let us consider the rigged Hilbert space $\mathcal S(\mathbb R)\hookrightarrow L^2(\mathbb R)\hookrightarrow\mathcal S^\times(\mathbb R)$: the system of derivative of Dirac's deltas $\{\delta_x'\}_{x\in\mathbb R}$ is total. Since $\mathcal S(\mathbb R)$ is a Fr\`{e}chet space,  it holds  $(ii)$ of  Proposition \ref{prop2}. However $\{\delta_x'\}_{x\in\mathbb R}$ is not a distribution upper bounded semi-frame, in fact:
$$
\int_{\mathbb R}|\ip{\phi}{\delta'_x}|^2dx=\|\phi'\|_2^2\quad\forall\phi\in\mathcal S(\mathbb R)
$$
but the derivative operator  $\frac{d}{dx}: \mathcal S(\mathbb R)\rightarrow L^2(\mathbb R)$ is unbounded (clearly on the topology of Hilbert norm).
\enrem
\berem
In \cite{TTT} it is defined the notion of {\it bounded Bessel map}, that is a Bessel map in rigged Hilbert space such that: $\int_X|\ip{f}{\omega_x}|^2d\mu \leq B \|f\|^2,\; \forall f\in \D$. It is a more general notion than upper bounded semiframe. In fact, we can consider, as example,  the distribution $\omega_x:=\eta_K(x)\delta_x$ where $\eta_K(x)$ is a $C^\infty$-function with compact support $K$ and $M:=\max_{x\in K}|\eta_K(x)|$:
$$
\int_{\mathbb R}\!|\!\ip{\phi}{\omega_x}\!|^2dx\!=\!\!\int_{\mathbb R}|\!\ip{\phi}{{\eta_K}(x)\delta_x}\!|^2dx\!=\!\!\int_{\mathbb R}|{\overline{\eta_K(x)}\phi(x)}|^2dx\leq\! M^2\!\!\int_{K}\!|{\phi(x)}|^2dx\leq\! M^2\|\phi\|_2^2.
$$
Therefore $\omega$ is a bounded Bessel map, but it is not total, then it is not an upper bounded semi-frame.
\enrem
\bedefi
Given a rigged Hilbert space $\D\hookrightarrow\H\hookrightarrow\D^\times$, a  Bessel map $\omega$ is a {\it distribution lower Semi-Frame} if  there exists $A>0$ such that:
\begin{equation}\label{eqn_bess_ex} A \|f\|^2 \leq\int_X|\ip{f}{\omega_x}|^2d\mu ,\; \forall f\in \D, f\neq 0.
\end{equation}
\findefi
By definition, it follows that $\omega$ is total. One has that,  if $\D$ is a Fr\`{e}chet space, by Proposition \ref{prop2} it follows that the frame operator $S_\omega\in\LDD$ is unbounded.  Furthermore  $S_\omega$ is injective, and $S_\omega^{-1}$ is bounded.
\beex
Let us consider the space $\mathcal O_M$,   known (see \cite{reed1}) as the set of infinitely differentiable functions on ${\mathbb R}$ that are polynomially bounded together with their derivatives. Let us consider $g(x)\in\mathcal O_M$  such that $0<m<|g(x)|$. If we define $\omega_x:=g(x)\delta_x$, then $\{\omega_x\}_{x\in\mathbb R}$ is a distribution lower semi-frame with  $A=m^2$.
\enex
The  proof of the following Lemma is analogue to Lemma 2.5 in \cite{semifr1}:
\begin{lemma}
Let $\omega$ un upper bounded semi-frame with upper frame bound $M$ and $\theta$ a total family dual to $\omega$. Then $\theta$ is a lower semi-frame, with lower frame bound $M^{-1}$.
\end{lemma}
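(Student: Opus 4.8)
The plan is to extract the lower bound directly from the defining duality relation by a single application of the Cauchy--Schwarz inequality in $L^2(X,\mu)$, exactly as in the Hilbert-space analogue of \cite{semifr1}. First I would specialize the duality identity $\ip{f}{g}=\int_X \ip{f}{\theta_x}\ip{\omega_x}{g}\,d\mu$ to the case $g=f$. Since the sesquilinear form putting $\D$ and $\D^\times$ in duality extends the inner product of $\H$, and $\ip{\omega_x}{f}=\overline{\ip{f}{\omega_x}}$, this yields
$$\|f\|^2=\int_X \ip{f}{\theta_x}\,\overline{\ip{f}{\omega_x}}\,d\mu,\qquad \forall f\in\D,$$
the integral converging absolutely because $\theta$ is a dual map.

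Next I would read the right-hand side as the $L^2(X,\mu)$ pairing of the two functions $x\mapsto \ip{f}{\theta_x}$ and $x\mapsto \ip{f}{\omega_x}$ and apply Cauchy--Schwarz:
$$\|f\|^2\le \left(\int_X|\ip{f}{\theta_x}|^2\,d\mu\right)^{1/2}\left(\int_X|\ip{f}{\omega_x}|^2\,d\mu\right)^{1/2}.$$
The second factor is controlled by the upper semi-frame bound, $\left(\int_X|\ip{f}{\omega_x}|^2\,d\mu\right)^{1/2}\le M^{1/2}\|f\|$. Substituting and dividing by $\|f\|$ (legitimate for $f\neq 0$) gives $\|f\|\le M^{1/2}\left(\int_X|\ip{f}{\theta_x}|^2\,d\mu\right)^{1/2}$, that is
$$M^{-1}\|f\|^2\le \int_X|\ip{f}{\theta_x}|^2\,d\mu,\qquad \forall f\in\D,\ f\neq 0.$$
This is precisely the lower semi-frame inequality with lower bound $A=M^{-1}$, and totality of $\theta$ is assumed in the hypothesis (it is in any case forced by this lower estimate).

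The only point I expect to require care is definitional rather than computational. Because the notion of lower semi-frame in this paper is phrased for a Bessel map, one should observe that the Cauchy--Schwarz estimate above produces the lower bound for every $f$ \emph{independently} of whether the integral $\int_X|\ip{f}{\theta_x}|^2\,d\mu$ is finite: if that integral were infinite the inequality would hold trivially, and the bounds derived furnish only a lower control on $\|\,x\mapsto\ip{f}{\theta_x}\,\|_2$, never an upper one. Hence the substantive content delivered by the argument is exactly the lower inequality together with totality, which is what must be verified; the finiteness needed to call $\theta$ a Bessel map must be read off from the hypothesis that $\theta$ is a (total) dual map of $\omega$, and is not a consequence of the displayed estimate itself.
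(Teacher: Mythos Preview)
Your argument is correct and is precisely the approach the paper intends: the paper does not spell out its own proof but states that it is analogous to Lemma~2.5 in \cite{semifr1}, whose proof is exactly the Cauchy--Schwarz estimate on the duality identity that you carry out. Your closing remark about the Bessel hypothesis in the paper's definition of a distribution lower semi-frame is an apt observation---the displayed inequality alone does not force $\int_X|\ip{f}{\theta_x}|^2\,d\mu<\infty$, and the paper (like \cite{semifr1}) tacitly treats the lower inequality as the essential content.
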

\subsection{Distribution Frames}
This section is devoted to frames, with main results already showed in \cite{TTT}.
{
\bedefi \label{defn_distribframe}  Let $\D[t]\subset\H\subset \D^\times[t^\times]$ be a rigged Hilbert space, with $\D[t]$ a reflexive space and $\omega$ a Bessel map.
We say that $\omega$ is a {\em  distribution frame} if there exist $A,B>0$ such that
\begin{equation} \label{eqn_frame_main1} A\|f\|^2 \leq \int_X|\ip{f}{\omega_x}|^2d\mu \leq B \|f\|^2, \quad \forall f\in \D. \end{equation}
\findefi
}
A distribution frame $\omega$ is clearly, in particular,  an upper bounded semi-frame. Thus, we can consider the operator $\hat{S}_\omega$ defined in \eqref{eqn_ext_Omega}. It is easily seen that, in this case,
$$ A\|f\| \leq \| \hat{S}_\omega f\| \leq B\|f\|,\quad \forall f\in \H. $$ This inequality, together with the fact that $\hat{S}_\omega$ is symmetric, implies that $\hat{S}_\omega$ has a bounded inverse $\hat{S}_\omega^{-1}$ everywhere defined in $\H$.

\berem
It is worth noticing that the fact that $\Omega$ and $S_\omega$ extend to $\H$ does not mean that $\omega$ a frame in the Hilbert space $\H$, because we do not know if the extension of ${S}_\omega$ has the form of \eqref{eqn_frameop} with $f,g \in \H$.
\enrem
To conclude this section, we reassume a list of properties of frames proved in \cite{TTT}.  We have the following:
\begin{lemma}\label{lemma_38}Let $\omega$ be a distribution frame. Then, there exists $R_\omega\in {\mc L}(\D)$ such that $S_\omega R_\omega f=R_\omega^\times S_\omega f= f$, for every $f\in \D$.
\end{lemma}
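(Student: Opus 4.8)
The plan is to take as candidate the restriction to $\D$ of the Hilbert-space inverse, $R_\omega:=\hat{S}_\omega^{-1}\!\up\D$. There is in fact no other choice: the requirement $S_\omega R_\omega f=f$ forces $R_\omega f=\hat{S}_\omega^{-1}f$, because on $\D$ the operator $S_\omega$ agrees with $\hat{S}_\omega$, which (as recalled after Definition \ref{defn_distribframe}) is injective and possesses a bounded everywhere-defined inverse $\hat{S}_\omega^{-1}$ on $\H$. Thus the whole statement reduces to one point: that $\hat{S}_\omega^{-1}$ maps $\D$ into $\D$ and that the resulting map is $t$-continuous, i.e. lies in ${\mc L}(\D)$.

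Granting this, both identities are immediate from $\hat{S}_\omega=\hat{S}_\omega^*$. For the first, $R_\omega f=\hat{S}_\omega^{-1}f\in\D$ gives $S_\omega R_\omega f=\hat{S}_\omega\hat{S}_\omega^{-1}f=f$, since $\hat{S}_\omega$ extends $S_\omega$. For the second, I would use the defining relation of $R_\omega^\times\in{\mc L}(\D^\times)$: for every $g\in\D$, $\ip{R_\omega^\times S_\omega f}{g}=\ip{S_\omega f}{R_\omega g}=\ip{\hat{S}_\omega f}{\hat{S}_\omega^{-1}g}=\ip{f}{g}$, where the inner equalities use that on $\D$ the operators act as $\hat{S}_\omega$ and $\hat{S}_\omega^{-1}$ inside $\H$ and that $\hat{S}_\omega$ is self-adjoint. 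As elements of $\D^\times$ are determined by their action on $\D$, this yields $R_\omega^\times S_\omega f=f$.

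The heart of the matter, and the step I expect to be the main obstacle, is the invariance $\hat{S}_\omega^{-1}(\D)\subseteq\D$ together with $t$-continuity. By construction $S_\omega$ is only guaranteed to send $\D$ into $\D^\times$, and the frame bounds control $\|\hat{S}_\omega f\|$ in the \emph{Hilbert} norm but say nothing about the defining seminorms of $\D[t]$, so the inclusion does not come for free. Since $\D[t]$ is reflexive, the cleanest reformulation is to produce instead an operator $V\in{\mc L}(\D^\times)$ extending $\hat{S}_\omega^{-1}$, whereupon $R_\omega:=V^\times\in{\mc L}(\D)$ and, using $\hat{S}_\omega=\hat{S}_\omega^*$, one checks that $V^\times$ restricts to $\hat{S}_\omega^{-1}$ on $\D$. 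One sees here exactly where the difficulty sits: were it already known that $S_\omega\in{\mc L}(\D)$ (not merely $S_\omega\in\LDD$), then from $A\,{\mb I}\leq\hat{S}_\omega\leq B\,{\mb I}$ one has $\|{\mb I}-\tfrac{2}{A+B}\hat{S}_\omega\|\leq\frac{B-A}{B+A}<1$, the Neumann series for $\hat{S}_\omega^{-1}$ would converge in $\BH$ with partial sums in ${\mc L}(\D)$, and only the upgrade of this convergence to ${\mc L}(\D)$ would remain. The genuine obstacle is therefore the invariance of $\D$ under $\hat{S}_\omega$, equivalently under $\hat{S}_\omega^{-1}$.

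I would isolate this invariance as a separate lemma and attack it in two stages. First, establish the set-theoretic inclusion $\hat{S}_\omega^{-1}(\D)\subseteq\D$; the natural route is to exhibit the canonical dual family, a weakly measurable $\theta$ representing ``$\hat{S}_\omega^{-1}\omega_x$'' in $\D^\times$, and to read off the reconstruction $f=\int_X\ip{f}{\omega_x}\theta_x\,d\mu$ for $f\in\D$, which forces $\hat{S}_\omega^{-1}f$ back into $\D$. Second, deduce $t$-continuity from the closed graph theorem in the Fr\'echet setting, the graph of $\hat{S}_\omega^{-1}\!\up\D$ being closed because convergence in $\D[t]$ implies convergence in $\H$, where $\hat{S}_\omega^{-1}$ is bounded, together with reflexivity. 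It is precisely at the first stage that the specific hypotheses on the triple $\D[t]\subset\H\subset\D^\times$ used in \cite{TTT} must be invoked.
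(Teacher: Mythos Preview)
Your candidate $R_\omega=\hat{S}_\omega^{-1}\!\up\D$ is exactly what the paper records in the remark immediately following the lemma, so the identification is correct. Note, however, that the present paper does \emph{not} prove Lemma~\ref{lemma_38}: it is quoted from \cite{TTT}, and the invariance $\hat{S}_\omega^{-1}\D\subset\D$ is likewise only cited (as \cite[Remark~3.7]{TTT}). There is thus no in-paper argument to compare against; what can be assessed is whether your outline closes the gap you yourself isolate.

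It does not. Your proposed route to the invariance --- ``exhibit the canonical dual family $\theta$ representing $\hat{S}_\omega^{-1}\omega_x$ in $\D^\times$ and read off the reconstruction'' --- is circular in this framework. The functionals $\omega_x$ lie in $\D^\times$, not in $\H$, so $\hat{S}_\omega^{-1}\omega_x$ is undefined until one has extended $\hat{S}_\omega^{-1}$ continuously from $\H$ to $\D^\times$; but that extension is precisely $R_\omega^\times$, which exists only once $R_\omega\in{\mc L}(\D)$ is already in hand. Indeed, in the paper the dual $\theta_x:=R_\omega^\times\omega_x$ is \emph{defined after} Lemma~\ref{lemma_38} and Proposition~\ref{prop_dual2}, using $R_\omega$ as input. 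So the ``first stage'' of your plan presupposes its own conclusion. The Neumann-series remark has the same defect: it needs $S_\omega(\D)\subset\D$, which the frame inequalities alone do not give (they only yield $S_\omega:\D\to\H$, as the paper notes before \eqref{eqn_frameop_1}).

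Your second stage (closed graph for $t$-continuity once the set-theoretic inclusion is known) is fine and standard. What is genuinely missing is an independent argument for $\hat{S}_\omega^{-1}(\D)\subset\D$; for that one must go to \cite{TTT} and use whatever structural hypotheses on $\D[t]$ are imposed there, as you acknowledge in your final sentence. As written, the proposal is an accurate diagnosis of where the work lies, together with the easy half of the proof, but not a proof.
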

As consequence, the reconstruction formulas for distribution frames holds for all $f\in\D$:
$$
f=R_\omega^\times S_\omega f= \int_X\ip{f}{\omega_x}R_\omega^\times\omega_x d\mu;
$$

$$
f=S_\omega R_\omega f=\int_X\ip{R_\omega f}{\omega_x}\omega_x d\mu.
$$
These representations have to be interpreted in the weak sense.
\berem
The operator $R_\omega$ acts as an inverse of $S_\omega$. On the other hand the operator $\hat{S}_\omega$ has a bounded inverse $\hat{S}_\omega^{-1}$ everywhere defined in $\H$. It results  that  \cite[Remark 3.7]{TTT}: $\hat{S}_\omega^{-1}\D\subset\D$ and $R_\omega=\hat{S}_\omega^{-1}\upharpoonright_\D$.
\enrem
There exists the dual frame:
\begin{prop} \label{prop_dual2} Let $\omega$ be a distribution frame. Then there exists a weakly  measurable function $\theta$ such that:
$$ \ip{f}{g}= \int_X\ip{f}{\theta_x}\ip{\omega_x}{g}d\mu, \quad \forall f,g \in \D.$$
\end{prop}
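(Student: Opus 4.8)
The plan is to exhibit the dual map explicitly using the operator $R_\omega$ furnished by Lemma \ref{lemma_38}. Since $\D[t]$ is reflexive (as required in Definition \ref{defn_distribframe}) and $R_\omega \in \mathcal{L}(\D)$, its transpose $R_\omega^\times \in \mathcal{L}(\D^\times)$ is available, as recalled in Section \ref{sect_2}, and I would \emph{define}
$$\theta_x := R_\omega^\times \omega_x \in \D^\times, \quad x \in X.$$
The first task is to check that $\theta$ is weakly measurable. For fixed $f\in\D$, the defining identity of the transpose together with the conjugate-linear pairing gives $\ip{f}{\theta_x} = \ip{f}{R_\omega^\times\omega_x} = \ip{R_\omega f}{\omega_x}$; since $R_\omega f \in \D$ and $\omega$ is weakly measurable, the function $x\mapsto \ip{R_\omega f}{\omega_x}$ is $\mu$-measurable, which is precisely weak measurability of $\theta$.

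Next I would verify the reconstruction identity by a direct computation, avoiding any termwise manipulation of a weak integral. Using $\ip{f}{\theta_x} = \ip{R_\omega f}{\omega_x}$ and the definition \eqref{eqn_omega} of the form $\Omega$,
$$\int_X \ip{f}{\theta_x}\ip{\omega_x}{g}\,d\mu = \int_X \ip{R_\omega f}{\omega_x}\ip{\omega_x}{g}\,d\mu = \Omega(R_\omega f, g) = \ip{S_\omega R_\omega f}{g},$$
where the last equality is \eqref{eqn_frameop} applied to the element $R_\omega f \in \D$. Lemma \ref{lemma_38} gives $S_\omega R_\omega f = f$, so the right-hand side equals $\ip{f}{g}$, as required.

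Finiteness of all integrals involved is automatic from the Bessel property: applying the Bessel estimate to $R_\omega f \in \D$ shows that $x\mapsto\ip{f}{\theta_x} = \ip{R_\omega f}{\omega_x}$ lies in $L^2(X,\mu)$, while $x\mapsto\ip{\omega_x}{g}\in L^2(X,\mu)$ because $\omega$ is itself Bessel; Cauchy--Schwarz then bounds the integrand in $L^1(X,\mu)$. In particular $\theta$ turns out to be a Bessel map in its own right and hence qualifies as a dual map of $\omega$ in the sense of the earlier definition.

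I expect the only delicate point to be the bookkeeping around the transpose $R_\omega^\times$ and the conjugate-linear duality pairing: one must confirm $\ip{f}{R_\omega^\times\omega_x} = \ip{R_\omega f}{\omega_x}$ under the paper's conventions, namely $\ip{\Phi}{R_\omega g} = \ip{R_\omega^\times\Phi}{g}$ for $\Phi\in\D^\times$ combined with $\ip{f}{F} = \overline{\ip{F}{f}}$. It is exactly the reflexivity of $\D[t]$ that legitimises both the existence of $R_\omega^\times$ and this identity. Once that identity is secured the argument is purely algebraic, since the reconstruction is read off from the already-established form identity $\Omega(h,g)=\ip{S_\omega h}{g}$ rather than from the weak integral $\int_X \ip{R_\omega f}{\omega_x}\omega_x\,d\mu$ directly.
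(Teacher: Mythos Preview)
Your proposal is correct and follows essentially the same route as the paper: define $\theta_x := R_\omega^\times \omega_x$ (exactly the choice stated in the paper right after the proposition), rewrite $\ip{f}{\theta_x}=\ip{R_\omega f}{\omega_x}$ via the transpose, and then invoke \eqref{eqn_frameop} together with Lemma \ref{lemma_38} to obtain $\ip{S_\omega R_\omega f}{g}=\ip{f}{g}$. Your careful verification of weak measurability, integrability, and the conjugate-linear bookkeeping for $R_\omega^\times$ is accurate and, if anything, spells out details that the paper leaves implicit.
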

Where $\theta_x:=R_\omega^\times\omega_x$
The frame operator $S_\theta$ for $\theta$ is well defined and we have:
$S_\theta= I_{\D,\D^\times}R_\omega$.

{The distribution function  $\theta$, constructed in Proposition \ref{prop_dual2}, is also a distribution frame, called the {\em canonical dual frame} of $\omega$}. Indeed, it results that \cite{TTT}:
$$ B^{-1}\|f\|^2 \leq \ip{S_\theta f}{ f} \leq A^{-1}\|f\|^2, \quad \forall f\in \D.$$

\subsection{Parseval distribution frames}
\bedefi
If $\omega$ is a distribution frame, then we say that:
\begin{itemize}
\item[a)]
$\omega$ is a {\it tight} distribution frame if we can choose A = B as frame bounds.
In this case, we usually refer to A as a frame bound for $\omega$;
\item[b)]
$\omega$ is a {\it Parseval} distribution frame if A = B = 1 are frame bounds.
\end{itemize}
\findefi
More explicitly  a weakly measurable distribution function $\omega$ is called a {\em Parseval distribution frame} if $$\int_X|\ip{f}{\omega_x}|^2d\mu = \|f\|^2, \quad f\in \D.$$
It is clear that a Parseval distribution frame is a frame in the sense of Definition \ref{defn_distribframe} with $S_\omega=I_\D$, the identity operator of $\D$. In \cite{TTT} is proved the following:
\begin{lemma} \label{lemma_00}Let  $\D\subset\H\subset \D^\times$ be a rigged Hilbert space and $\omega: x\in X \to \omega_x\in \D^\times$ a weakly measurable map. The following statements are equivalent.
\begin{itemize}
\item[(i)] $\omega$ is a Parseval distribution frame;
\item[(ii)]$\ip{f}{g}= \int_X \ip{f}{\omega_x}\ip{\omega_x}{g}d\mu, \quad \forall f, g \in \D$;
\item[(iii)]$f = \int_X \ip{f}{\omega_x}\omega_x d\mu$,  the integral on the r.h.s. is understood as a continuous conjugate linear functional on $\D$, that is an element of $\D^\times$.
\end{itemize}
\end{lemma}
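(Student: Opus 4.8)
The plan is to prove the cycle $(i)\Rightarrow(ii)\Rightarrow(iii)\Rightarrow(i)$. The unifying idea is that the Parseval identity is exactly the restriction to the diagonal of the sesquilinear form $\Omega$ of \eqref{eqn_omega}, that $(ii)$ is its full (polarized) version, and that $(iii)$ is merely $(ii)$ reread as an equality of conjugate linear functionals on $\D$. I would first fix, for a Bessel map $\omega$, the weak meaning of the right-hand side of $(iii)$: the symbol $\int_X\ip{f}{\omega_x}\omega_x d\mu$ denotes the functional $\ell_f\colon g\mapsto\int_X\ip{f}{\omega_x}\ip{\omega_x}{g}d\mu=\Omega(f,g)$, which is conjugate linear in $g$, and to say that this integral ``is an element of $\D^\times$'' is precisely to say that $\ell_f$ is continuous on $\D$.

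For $(i)\Rightarrow(ii)$: a Parseval distribution frame is in particular a Bessel map, so for $f,g\in\D$ both $x\mapsto\ip{f}{\omega_x}$ and $x\mapsto\ip{g}{\omega_x}$ belong to $L^2(X,\mu)$ and, by the Cauchy--Schwarz inequality, $\Omega(f,g)$ is finite; thus $\Omega$ is a well-defined sesquilinear form on $\D\times\D$. The Parseval identity states $\Omega(f,f)=\int_X|\ip{f}{\omega_x}|^2 d\mu=\|f\|^2=\ip{f}{f}$. Two sesquilinear forms that coincide on the diagonal coincide everywhere, by the polarization identity; hence $\Omega(f,g)=\ip{f}{g}$ for all $f,g\in\D$, which is $(ii)$.

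For $(ii)\Rightarrow(iii)$ and $(iii)\Rightarrow(i)$: setting $g=f$ in $(ii)$ yields $\int_X|\ip{f}{\omega_x}|^2 d\mu=\|f\|^2<\infty$, so $\omega$ is Bessel and $\ell_f$ is defined. Now $(ii)$ reads $\ell_f(g)=\ip{f}{g}$ for every $g\in\D$; since $g\mapsto\ip{f}{g}$ is exactly the action of the vector $f\in\D\subset\D^\times$, the functional $\ell_f$ is continuous and equals $f$ in $\D^\times$, which is $(iii)$. Conversely, evaluating the $\D^\times$-identity $(iii)$ against an arbitrary $g\in\D$ returns $\ell_f(g)=\ip{f}{g}$, i.e. $(ii)$; in particular the choice $g=f$ gives $\int_X|\ip{f}{\omega_x}|^2 d\mu=\|f\|^2$, the Parseval identity, so $\omega$ is a Parseval distribution frame and the cycle closes.

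The computational steps are routine and rest on the Bessel calculus already set up after \eqref{eqn_omega}; the only point I would treat with care --- and which I regard as the crux --- is the bookkeeping of the Bessel property. Neither $(ii)$ nor $(iii)$ presupposes that $\omega$ is Bessel, so in both implications I must first recover square-integrability by specializing to $g=f$ before the functional $\ell_f$, and hence the interpretation of the integral as an element of $\D^\times$, is legitimate. Once this is in place, the passage between the scalar identity $(ii)$ and the functional identity $(iii)$ is immediate, since two elements of $\D^\times$ agree precisely when they agree against every test vector $g\in\D$.
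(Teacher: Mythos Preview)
The paper does not actually supply a proof of this lemma: it merely states ``In \cite{TTT} is proved the following'' and quotes the statement. There is therefore no in-paper argument to compare against. Your proposed cycle $(i)\Rightarrow(ii)\Rightarrow(iii)\Rightarrow(i)$ is the standard route and is correct: polarization for $(i)\Rightarrow(ii)$, the weak-integral reading for $(ii)\Leftrightarrow(iii)$, and the specialization $g=f$ for $(ii)\Rightarrow(i)$ are exactly what one expects, and your care about first recovering the Bessel property from the diagonal before invoking $\ell_f$ is the right point to flag.
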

The representation in (iii) of Lemma \ref{lemma_00} is not necessarily unique. 
\section{Distribution basis}
\label{sect_5}
\bedefi
 Let $\D[t]$ be a locally convex space, $\D^\times$ its conjugate dual and $\omega: x\in X\to \omega_x \in \D^\times$ a weakly measurable map. Then
$\omega$ is a {\it distribution basis} for $\D$ if, for every $f\in\D$, there exists a {\it unique} measurable function $\xi_f$ such that:
$$
\ip{f}{g}=\int_X\xi_f(x)\ip{\omega_x}{g}d\mu, \quad\forall f,g\in\D
$$
and, for every $x\in X$, the linear functional $f\in\D\rightarrow\xi_f(x)\in\mathbb C$ is continuous in $\D[t]$.
\findefi
\noindent
The distribution basis $\omega$  can be represented by:
$$
f=\int_X\xi_f(x)\omega_x d\mu
$$
 in weak sense.
\berem
Clearly, if $\omega$ is a distribution basis, then it is $\mu$-independent. Furthermore, since $f\in\D\rightarrow\xi_f(x)$ continuously, there exists a unique weakly $\mu$-measurable map $\theta:X\rightarrow \D^\times$ such that: $\xi_f(x)=\ip{f}{\theta_x}$ for every $f\in\D$. We call $\theta$ {\it dual} map of $\omega$. If $\theta$ is $\mu$-independent, then it is a distribution basis too.
\enrem
\subsection{Gel'fand distribution bases}
As mentioned in the introduction, Gel'fand \cite[Ch.4, Theorem 2]{gelf3} called {\em complete system} a $\D^\times$-valued function $\zeta$, which satisfies the Parseval equality and with the property that every $f\in \D$ can be uniquely written as $f=\int_X\ip{f}{\zeta_x}\zeta_x d\mu$, in weak sense.
As we shall see in the following discussion, these two conditions are a good substitute for the notion of {\em orthonormal basis} which is meaningless in the present framework.
\bedefi
A weakly measurable map $\zeta$ is {\em Gel'fand distribution basis} if it is a $\mu$-independent Parseval distribution frame.
\findefi
By definition and Lemma \ref{lemma_00}, this means that, for every $f\in \D$ there exists a unique function $\xi_f\in L^2(X, \mu)$ such that:
\begin{equation}
\label{bgelf}
 f=\int_X \xi_f(x)\zeta_x d\mu
\end{equation}
with $\xi_f(x)=\ip{f}{\zeta_x}$ $\mu$-a.e. Furthermore $\|f\|^2=\int_X |\ip{f}{\zeta_x}|^2d\mu$ and $\zeta$ is total too.

For every $x\in X$, the map $f\in \H\to \xi_f(x)\in {\mb C}$ defines as in \eqref{eqn_check} a linear functional $\check{\zeta_x}$ on $\H$, then for all $f\in \H$:
$$ f=\int_X \ip{f}{\check{\zeta_x}}{\zeta_x} d\mu.$$
We have the following characterization result \cite{TTT}:
\begin{prop}\label{prop_gelfandbasis}Let  $\D\subset\H\subset \D^\times$ be a rigged Hilbert space and let $\zeta: x\in X \to \zeta_x\in \D^\times$ be a Bessel distribution map. Then the following statements are equivalent.
\begin{itemize}
\item[(a)] $\zeta$ is a Gel'fand distribution basis.
\item[(b)] The synthesis operator $T_\zeta$ is an isometry of $L^2(X,\mu)$ onto $\H$.
\end{itemize}
\end{prop}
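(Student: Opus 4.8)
The plan is to translate both conditions into identities involving the bounded Hilbert-space operator $T_\zeta$ and its Hilbert adjoint $T_\zeta^*$, and then apply the elementary fact that a bounded map between Hilbert spaces is a surjective isometry (hence unitary) precisely when $T_\zeta^* T_\zeta = I_{L^2(X,\mu)}$ and $T_\zeta T_\zeta^* = I_\H$. The enabling preliminary observation is that a Parseval frame is, in particular, an upper bounded semi-frame with $B=1$; hence, by the machinery of Section \ref{sect_4}, the synthesis operator $T_\zeta$ maps $L^2(X,\mu)$ boundedly into $\H$, its Hilbert adjoint $T_\zeta^*$ extends the analysis operator $T_\zeta^\times\colon f\mapsto \ip{f}{\zeta_x}$, and $\hat{S}_\zeta = T_\zeta T_\zeta^*$. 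These identifications are exactly the dictionary between the distributional objects in $\LDD$ and genuine bounded Hilbert-space operators.

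For $(a)\Rightarrow(b)$, I would assume $\zeta$ is a $\mu$-independent Parseval frame. The Parseval identity $\int_X|\ip{f}{\zeta_x}|^2 d\mu = \|f\|^2$ for $f\in\D$ says that $T_\zeta^*$ is isometric on $\D$, hence, by density and continuity, isometric on all of $\H$; equivalently $\hat{S}_\zeta = T_\zeta T_\zeta^* = I_\H$, so $T_\zeta$ is surjective onto $\H$. Separately, $\mu$-independence is precisely the injectivity of $T_\zeta$: the equation $T_\zeta\xi = 0$ in $\D^\times$ amounts to $\int_X \xi(x)\ip{\zeta_x}{g}d\mu = 0$ for all $g\in\D$, which (after the harmless conjugation relating $\ip{\zeta_x}{g}$ and $\ip{g}{\zeta_x}$) is the $\mu$-independence condition and forces $\xi=0$ a.e. Now $T_\zeta^*$ isometric together with $T_\zeta$ injective gives that $T_\zeta^*$ has dense range, whence $T_\zeta^* T_\zeta = I_{L^2}$, i.e. $T_\zeta$ is an isometry; combined with surjectivity, $T_\zeta$ is an isometry of $L^2(X,\mu)$ onto $\H$.

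For $(b)\Rightarrow(a)$, I would assume $T_\zeta$ is an isometry of $L^2(X,\mu)$ onto $\H$, hence unitary, so both $T_\zeta^* T_\zeta = I_{L^2}$ and $T_\zeta T_\zeta^* = I_\H$ hold. Injectivity of $T_\zeta$ yields $\mu$-independence by reading the previous computation backwards. The relation $\hat{S}_\zeta = T_\zeta T_\zeta^* = I_\H$ restricts on $\D$ to $S_\zeta = I_\D$; substituting into \eqref{eqn_frameop_1} and setting $f=g$ gives $\int_X|\ip{f}{\zeta_x}|^2 d\mu = \|f\|^2$ for all $f\in\D$, so $\zeta$ is a Parseval frame. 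Being also $\mu$-independent, $\zeta$ is a Gel'fand distribution basis.

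The operator-theoretic core (a surjective isometry is unitary) is immediate, so the step demanding the most care is the bridging one: verifying cleanly that the distributional synthesis operator genuinely lands in $\H$ with $T_\zeta^*$ extending $f\mapsto\ip{f}{\zeta_x}$, and that $\mu$-independence coincides with injectivity of $T_\zeta$ on $L^2(X,\mu)$. In other words, the main obstacle is justifying the passage from the weak identities in $\LDD$ to bona fide bounded Hilbert-space operator identities; once that bridge is secured, the equivalence follows directly from $\hat{S}_\zeta = T_\zeta T_\zeta^*$.
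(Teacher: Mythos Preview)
Your argument is correct and follows the natural operator-theoretic route: translate both (a) and (b) into the identities $T_\zeta T_\zeta^{*}=I_{\H}$ and $T_\zeta^{*}T_\zeta=I_{L^2}$, then match $\mu$-independence with injectivity of $T_\zeta$ and the Parseval identity with $T_\zeta^{*}$ being isometric on $\D$ (hence on $\H$). The one place worth tightening is in $(a)\Rightarrow(b)$: once you know $T_\zeta^{*}$ is isometric and has dense range, observe that an isometry has closed range, so $T_\zeta^{*}$ is in fact surjective and hence unitary; your deduction of $T_\zeta^{*}T_\zeta=I_{L^2}$ is then immediate.

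As for comparison: the present paper does not actually prove Proposition~\ref{prop_gelfandbasis}; it quotes the result from \cite{TTT}. The argument there proceeds along essentially the same lines you outline, using the factorisation $\hat S_\zeta = T_\zeta T_\zeta^{*}$ and the identification of $\mu$-independence with $\ker T_\zeta=\{0\}$. So your proposal is both sound and aligned with the intended proof; the ``bridging'' step you flag (that $T_\zeta$ genuinely lands in $\H$ with Hilbert adjoint extending the analysis operator) is exactly the content supplied by the bounded Bessel/upper semi-frame discussion in Section~\ref{sect_4}, and you are right to cite it.
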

\beex \label{ex_3.15}
Given the rigged Hilbert space $\mathcal S(\mathbb R)\hookrightarrow L^2(\mathbb R)\hookrightarrow\mathcal S^\times(\mathbb R)$, for $ x\in\mathbb R$ the  function $\zeta_x(y)=\frac{1}{\sqrt{2\pi}}e^{-ixy}$, define a (regular) tempered distribution: in fact, denoting as usual by $\hat{g}$, $\check{g}$, respectively, the Fourier transform and the inverse Fourier transform of $g\in L^2(\RN)$, one has that $\mathcal S(\mathbb R)\ni\phi\mapsto\ip{\phi}{\zeta_x}=\frac{1}{\sqrt{2\pi}}\int_{\mathbb R}\phi(y)e^{-ixy}dy=\hat\phi(x)\in\mathbb C$.
For all $x\in\mathbb R$ the set of functions  $\zeta:=\{\zeta_x(y)\}_{x\in{\mathbb R}}$ is a Gel'fand distribution basis: in fact the synthesis operator $T_\zeta: L^2(\mathbb R)\rightarrow L^2(\mathbb R)$ defined by:
$$
(T_\zeta\xi)(x)=\frac{1}{\sqrt{2\pi}}\int_{\mathbb R}\xi(y)e^{-ixy}dy=\hat\xi(x)\quad\forall \xi\in L^2(\mathbb R)
$$
is an isometry onto $L^2(\mathbb R)$ by Plancherel theorem. The analysis operator is: $T_\zeta^*f=\check{f}, \quad \forall f \in L^2(\RN)$.

\enex
\beex \label{ex_3.16}
Let us consider $\mathcal S(\mathbb R)\hookrightarrow L^2(\mathbb R)\hookrightarrow\mathcal S^\times(\mathbb R)$ (again)
For $x\in\mathbb R$, let us consider the  Dirac delta $\delta_x: \mathcal S(\mathbb R)\rightarrow\mathbb C$, \, $\phi\mapsto \ip{\phi}{\delta_x}:=\phi(x)$.
The set of Dirac deltas $\delta:=\{\delta_x\}_{x\in\mathbb R}$ is a Gel'fand distribution basis. In fact, the Parseval identity holds:
$$
\int_{\mathbb R}|\ip{\delta_x}{\phi}|^2dx=\int_{\mathbb R}|\phi(x)|^2dx=\|\phi\|^2_2\quad\forall\phi\in\mathcal S(\mathbb R).
$$
The synthesis operator: $T_\delta: L^2(\mathbb R)\rightarrow L^2(\mathbb R)$ is:
$$\ip{T_\delta\xi}{\phi}=\int_{\mathbb R}\xi(x)\ip{\delta_x}{\phi}dx=\int_{\mathbb R}\xi(x)\overline{\phi(x)}dx=\ip{\xi}{\phi} \quad\forall\phi\in\mathcal S(\mathbb R)$$
then $T_\delta\xi=\xi$ for all $\xi\in L^2(\mathbb R)$. Since $T_\delta$ is an identity, it is an isometry onto $L^2(\mathbb R)$.
\enex

\medskip
\subsection{Riesz distribution basis}
Proposition \ref{prop_gelfandbasis} and  \eqref{bgelf} suggests a more general class of bases that will play the same role as Riesz bases in the ordinary Hilbert space framework.
\bedefi
Let  $\D\subset\H\subset \D^\times$ be a rigged Hilbert space. A weakly measurable map $\omega: x\in X \to \omega_x\in \D^\times$  is a {\em Riesz distribution basis} if $\omega$ is a $\mu$-independent distribution frame.
\findefi
One has the following \cite{TTT}:
\begin{prop}\label{prop_rieszbasis}Let  $\D\subset\H\subset \D^\times$ be a rigged Hilbert space and let $\omega: x\in X \to \omega_x\in \D^\times$ be a Bessel distribution map. Then the following statements are equivalent.
\begin{itemize}
\item[(a)]  $\omega$ is a Riesz distribution basis;
\item[(b)] If $\zeta$ is a Gel'fand distribution basis, then the operator $W$ defined, for $f\in \H$, by
$$ f=\int_X \xi_f(x)\zeta_x d\mu \to W f= \int_X \xi_f(x)\omega_x d\mu$$ is continuous and has bounded inverse.
\item[(c)] The synthesis operator $T_\omega$ is a topological isomorphism of $L^2(X, \mu)$ onto $\H$.
\end{itemize}
\end{prop}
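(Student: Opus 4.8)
The plan is to reduce all three statements to properties of the synthesis operator $T_\omega$, using the standard Hilbert-space duality between a bounded-below estimate for a bounded operator and surjectivity of its adjoint. The guiding preliminary remark is that a distribution frame is in particular an upper bounded semi-frame, so $T_\omega$ maps $L^2(X,\mu)$ into $\H$ and is bounded, its Hilbert adjoint $T_\omega^*$ is the extended analysis operator $g\mapsto \ip{g}{\check\omega_x}$, and the two frame inequalities coalesce into the single chain $A\|f\|^2\le \|T_\omega^* f\|^2\le B\|f\|^2$. This holds first for $f\in\D$ (where $\ip{f}{\check\omega_x}=\ip{f}{\omega_x}$) and then, since both ends are continuous on $\H$ and $\D$ is dense, for every $f\in\H$.

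For $(a)\Rightarrow(c)$ I would read off the three operator properties of $T_\omega$ one by one. The upper frame bound is boundedness of $T_\omega$; the lower frame bound says $T_\omega^*$ is bounded below, which is equivalent to surjectivity of $T_\omega$ onto $\H$; and $\mu$-independence is exactly injectivity of $T_\omega$, since $T_\omega\xi=0$ means $\int_X\xi(x)\ip{\omega_x}{g}\,d\mu=0$ for all $g\in\D$, forcing $\xi=0$ a.e. A continuous bijection between Banach spaces has continuous inverse by the open mapping theorem, so $T_\omega$ is a topological isomorphism. For the converse $(c)\Rightarrow(a)$ I would run the same correspondences backwards: boundedness of $T_\omega$ gives the upper bound $B=\|T_\omega\|^2$; a topological isomorphism has a topological isomorphism as adjoint, so $T_\omega^*$ is bounded below and yields the lower bound; and injectivity of $T_\omega$ returns $\mu$-independence. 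Hence $\omega$ is a $\mu$-independent distribution frame, i.e.\ a Riesz distribution basis.

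For $(b)\Leftrightarrow(c)$ I would invoke Proposition \ref{prop_gelfandbasis}: a Gel'fand distribution basis $\zeta$ has $T_\zeta$ an isometry of $L^2(X,\mu)$ onto $\H$, hence a unitary, so for $f\in\H$ one has $\xi_f=T_\zeta^{-1}f$ and the operator $W$ of $(b)$ is precisely $W=T_\omega T_\zeta^{-1}$. Since $T_\zeta^{-1}$ is itself a topological isomorphism $\H\to L^2(X,\mu)$, the operator $W$ on $\H$ is continuous with bounded inverse if and only if $T_\omega$ is a topological isomorphism of $L^2(X,\mu)$ onto $\H$; concretely one direction writes $T_\omega=W T_\zeta$ and the other $W=T_\omega T_\zeta^{-1}$.

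The step I expect to demand the most care is the passage, inside $(a)\Leftrightarrow(c)$, between the frame inequalities and the bijectivity of $T_\omega$: specifically, deriving surjectivity of the synthesis operator $T_\omega$ from the bounded-below estimate on the analysis operator $T_\omega^*$, and making sure the inequalities, available a priori only for $f\in\D$, are transported to all of $\H$ before the surjectivity/bounded-below duality is applied. One must also keep in mind that the identity $\int_X|\ip{f}{\omega_x}|^2\,d\mu=\|T_\omega^* f\|^2$ rests on the extended functionals $\check\omega_x$, which coincide with $\omega_x$ only on $\D$.
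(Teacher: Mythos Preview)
The paper does not actually supply its own proof of this proposition: the sentence preceding it reads ``One has the following \cite{TTT}:'', so the argument lives in the cited reference and is only recorded here as a survey result. There is therefore no in-paper proof to compare against line by line.

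That said, your proposal is correct and is essentially the standard route one would expect from \cite{TTT}: all three conditions are rephrased as operator-theoretic properties of the synthesis operator $T_\omega$, using the factorisation $W=T_\omega T_\zeta^{-1}$ (with $T_\zeta$ unitary by Proposition~\ref{prop_gelfandbasis}) for the equivalence $(b)\Leftrightarrow(c)$, and the Hilbert-space duality ``$T_\omega^\ast$ bounded below $\Leftrightarrow$ $T_\omega$ surjective'' together with $\mu$-independence $\Leftrightarrow$ $\ker T_\omega=\{0\}$ for $(a)\Leftrightarrow(c)$. Your care in extending the frame inequalities from $\D$ to $\H$ via density and the extended functionals $\check\omega_x$ is exactly the point where the distribution setting differs from the purely Hilbert one, and you handle it correctly. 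The only cosmetic remark is that in $(b)\Rightarrow(c)$ one tacitly needs at least one Gel'fand distribution basis to exist so that the hypothesis is not vacuous; in the intended setting (e.g.\ $X$ a $\sigma$-finite measure space with $L^2(X,\mu)$ separable, or any context where a Gel'fand basis such as the $\delta_x$ or the Fourier characters is available) this is harmless.
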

\begin{prop} If $\omega$ is a Riesz distribution basis then $\omega$ possesses a unique dual frame $\theta$ which is also a Riesz distribution basis.
\end{prop}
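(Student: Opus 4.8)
The plan is to split the statement into three tasks: the \emph{existence} of a dual frame $\theta$, the claim that this $\theta$ is itself a \emph{Riesz distribution basis}, and its \emph{uniqueness}. Existence is essentially already available: since a Riesz distribution basis is by definition a $\mu$-independent distribution frame, Proposition \ref{prop_dual2} produces the canonical dual frame $\theta_x:=R_\omega^\times\omega_x$, and the discussion following that proposition shows $\theta$ is again a distribution frame (with the bounds $B^{-1},A^{-1}$). So the genuine content is to upgrade ``frame'' to ``Riesz basis'' for $\theta$ and to settle uniqueness.

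To show $\theta$ is a Riesz distribution basis I would invoke the equivalence (a)$\Leftrightarrow$(c) of Proposition \ref{prop_rieszbasis} and reduce everything to proving that the synthesis operator $T_\theta$ is a topological isomorphism of $L^2(X,\mu)$ onto $\H$. The key point is the operator identity $T_\theta=\hat S_\omega^{-1}T_\omega$. To establish it, I would compute, for $\xi\in L^2(X,\mu)$ and $g\in\D$, $\ip{T_\theta\xi}{g}=\int_X\xi(x)\ip{R_\omega^\times\omega_x}{g}\,d\mu=\int_X\xi(x)\ip{\omega_x}{R_\omega g}\,d\mu=\ip{T_\omega\xi}{R_\omega g}$, using the defining property $\ip{R_\omega^\times\omega_x}{g}=\ip{\omega_x}{R_\omega g}$ of the $\mc L(\D)$-adjoint. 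Then, by the Remark following Lemma \ref{lemma_38}, $R_\omega=\hat S_\omega^{-1}\upharpoonright_\D$, and since $\hat S_\omega^{-1}$ is self-adjoint on $\H$ this last pairing equals $\ip{\hat S_\omega^{-1}T_\omega\xi}{g}$; as $\D$ is dense in $\H$ this forces $T_\theta\xi=\hat S_\omega^{-1}T_\omega\xi$. Because $\omega$ is a Riesz basis, $T_\omega$ is a topological isomorphism onto $\H$ by Proposition \ref{prop_rieszbasis}, while $\hat S_\omega^{-1}$ is a bounded bijection of $\H$ with bounded inverse $\hat S_\omega$; hence the composite $T_\theta$ is a topological isomorphism onto $\H$. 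Since $\theta$ is Bessel (being a frame), the hypotheses of Proposition \ref{prop_rieszbasis} are met, and implication (c)$\Rightarrow$(a) yields that $\theta$ is a Riesz distribution basis.

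For uniqueness I would exploit the injectivity of $T_\omega$. If $\theta'$ is any dual frame, then for all $f,g\in\D$ one has $\ip{f}{g}=\int_X\ip{f}{\theta'_x}\ip{\omega_x}{g}\,d\mu$. Setting $\psi_f(x):=\ip{f}{\theta'_x}$ (which lies in $L^2(X,\mu)$ because $\theta'$ is Bessel) and using $\ip{\omega_x}{g}=\overline{(T_\omega^*g)(x)}$ for $g\in\D$, this reads $\ip{f}{g}=\ip{\psi_f}{T_\omega^*g}_{L^2}=\ip{T_\omega\psi_f}{g}_\H$ for every $g\in\D$; by density $T_\omega\psi_f=f$. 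Injectivity of $T_\omega$ then gives $\psi_f=T_\omega^{-1}f$, i.e. $\ip{f}{\theta'_x}=(T_\omega^{-1}f)(x)$ $\mu$-a.e. for each $f\in\D$. The canonical dual constructed above satisfies the same relation, so every dual frame is weakly equal to it a.e.; in the sense in which these weakly measurable maps are identified, the dual frame is unique.

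I expect the main obstacle to be the bookkeeping in the identity $T_\theta=\hat S_\omega^{-1}T_\omega$: one must keep straight the three distinct adjoints in play — the $\mc L(\D)$-adjoint $R_\omega^\times$, the Hilbert adjoint $T_\omega^*$, and the self-adjointness of $\hat S_\omega^{-1}$ on $\H$ — and justify passing from the pairing $\ip{T_\omega\xi}{R_\omega g}$, which is a priori a $\D^\times$–$\D$ duality, to a genuine $\H$-inner product so that self-adjointness of $\hat S_\omega^{-1}$ may be applied (this is legitimate because, for the upper semi-frame $\omega$, $T_\omega$ takes values in $\H$ and $R_\omega g\in\D\subset\H$). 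All convergence and measurability questions are already absorbed into Propositions \ref{prop_dual2} and \ref{prop_rieszbasis}.
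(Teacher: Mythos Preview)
The paper does not supply a proof of this proposition; it is stated without argument, in parallel with Proposition~\ref{prop_rieszbasis} which is attributed to \cite{TTT}. So there is no ``paper's own proof'' to compare against.

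Your argument is sound. Existence and the frame bounds for $\theta$ are indeed inherited from Proposition~\ref{prop_dual2} and the subsequent remark. The identity $T_\theta=\hat S_\omega^{-1}T_\omega$ is established correctly: the passage from $\ip{T_\omega\xi}{R_\omega g}$ to $\ip{\hat S_\omega^{-1}T_\omega\xi}{g}$ is justified exactly as you say, since $T_\omega$ maps into $\H$ when $\omega$ is a frame and $\hat S_\omega^{-1}$ is bounded self-adjoint on $\H$. The appeal to (c)$\Rightarrow$(a) in Proposition~\ref{prop_rieszbasis} then closes the Riesz-basis claim. For uniqueness, your use of the injectivity of $T_\omega$ is equivalent to invoking the $\mu$-independence of $\omega$ directly: from $\int_X(\ip{f}{\theta_x}-\ip{f}{\theta'_x})\ip{\omega_x}{g}\,d\mu=0$ for all $g\in\D$, $\mu$-independence forces $\ip{f}{\theta_x}=\ip{f}{\theta'_x}$ $\mu$-a.e.\ for each $f$. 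Your caveat about identifying weakly measurable maps that agree weakly a.e.\ is the right disclaimer; in the Fr\'echet (hence separable) setting one can pass to a single exceptional null set via a countable dense subset of $\D$, which is the standard convention here.
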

\beex
Let us consider $f\in C^\infty(\mathbb R)$: $0<m<|f(x)|< M$. Let us define $\omega_x:=f(x)\delta_x$: then $\{\omega_x\}_{x\in\mathbb R}$ is a distribution frame:
$$
\int_{\mathbb R}|\ip{\omega_x}{\phi}|^2dx=\int_{\mathbb R}|\overline{f(x)}\phi(x)|^2dx\leq M^2\|\phi\|_2^2,\quad\forall\phi\in\mathcal S(\mathbb R).
$$
and
$$
m^2\|\phi\|_2^2 \leq\int_{\mathbb R}|\overline{f(x)}\phi(x)|^2dx \leq M^2\|\phi\|_2^2,\quad\forall\phi\in\mathcal S(\mathbb R)
$$
Furthermore, $\{\omega_x\}_{x\in\mathbb R}$ is $\mu$ independent, in fact, putting:
$$
\int_{\mathbb R}\xi(x)\ip{\omega_x}{g}dx=0,\quad\forall g\in\mathcal S(\mathbb R),
$$
one has:
$$
\int_{\mathbb R}\xi(x)\ip{\omega_x}{g}dx=\int_{\mathbb R}\xi(x){\overline{f(x)}\ip{\delta_x}{g}}dx=0,\quad \forall g\in\mathcal S(\mathbb R)
$$
since $\{\delta_x\}_{x\in\mathbb R}$ is $\mu$-independent, it follows that $\xi(x){\overline{f(x)}}=0$ q.o., then $\xi(x)=0$ q.o.
By definition, $\{\omega_x\}_{x\in\mathbb R}$ is a Riesz  distribution basis.
\enex
\section*{Concluding remarks}
In Hilbert space, frames, semi-frames, Bessel, Riesz-Fischer sequences,  and Riesz bases  are related through the action of a linear operator on elements of orthonormal basis  (see also \cite[Remark 3.22]{TTT}). On the other hand,  in literature have been already considered some studies on  bounds (upper and lower) of these sequences and their links with the linear operators related to them (see \cite{balastoeva}\cite{casachliin} and for semi-frames  \cite{semifr1}). For that, it is desirable to continue an analogue study in rigged Hilbert space and linear operators in $\LDD$.

\bibliographystyle{amsplain}

\end{document}